\algnewcommand{\IIf}[1]{\State\algorithmicif\ #1\ \algorithmicthen}
\algnewcommand{\EndIIf}{\unskip\ \algorithmicend\ \algorithmicif}
\newtheorem{thm}{Theorem}[section]
\newtheorem{lem}[thm]{Lemma}
\newtheorem{prop}[thm]{Proposition}
\newtheorem{conj}[thm]{Conjecture}
\newtheorem{rem}[thm]{Remark}
\theoremstyle{definition}
\newtheorem{defn}[thm]{Definition}
\newcommand{\Z}{\mathbb{Z}}
\newcommand{\subjclass}[2][1991]{%
  \let\@oldtitle\@title%
  \gdef\@title{\@oldtitle\footnotetext{#1 \emph{Mathematics subject classification.} #2}}%
}
\newcommand{\keywords}[1]{%
  \let\@@oldtitle\@title%
  \gdef\@title{\@@oldtitle\footnotetext{\emph{Key words and phrases.} #1.}}%
}
\title{Weak Sequenceability in Cyclic Groups}
\date{}
\author[1]{Simone Costa}
\author[2]{Stefano Della Fiore}
\affil[1]{DICATAM, Sez.~Matematica, Universit\`a degli Studi di Brescia, \newline Via Branze~43, I~25123 Brescia, Italy}
\affil[2]{DII, Universit\`a degli Studi di Brescia, Via Branze~38, \newline I~25123 Brescia, Italy}
\subjclass[2010]{05C25, 05C38, 05D40}
\keywords{Sequenceability, Combinatorial Nullstellensatz}
\begin{document}
\maketitle
\begin{abstract}
\noindent A subset $A$ of an abelian group $G$ is {\em sequenceable} if there is an ordering $(a_1, \ldots, a_k)$ of its elements such that the partial sums~$(s_0, s_1, \ldots, s_k)$, given by $s_0 = 0$ and $s_i = \sum_{j=1}^i a_i$ for $1 \leq i \leq k$, are distinct, with the possible exception that we may have~$s_k = s_0 = 0$. In the literature there are several conjectures and questions concerning the sequenceability of subsets of abelian groups, which have been combined and summarized in \cite{AL20} into the conjecture that if a subset of an abelian group does not contain~0 then it is sequenceable.
If the elements of a sequenceable set $A$ do not sum to $0$ then there exists a simple path $P$ in the Cayley graph $Cay[G:\pm A]$ such that $\Delta(P) = \pm A$.

In this paper, inspired by this graph-theoretical interpretation, we propose a weakening of this conjecture. Here, under the above assumptions,  we want to find an ordering whose partial sums define a walk $W$ of girth bigger than $t$ (for a given $t < k$) and such that $\Delta(W) = \pm A$.  This is possible given that the partial sums $s_i$ and $s_j$ are different whenever $i$ and $j$ are distinct and $|i-j|\leq t$. 
In this case, we say that the set $A$ is $t$-{\em weak sequenceable}. The main result here presented is that any subset $A$ of $\mathbb{Z}_p\setminus \{0\}$ is $t$-weak sequenceable whenever $t<7$ or when $A$ does not contain pairs of type $\{x,-x\}$ and $t<8$.
\end{abstract}

\section{Introduction}\label{sec:intro}
One of the most important tools in the construction of combinatorial designs of various kinds is given by the difference method and its variations. The research of new and efficient ways to exploit these methods leads to several interesting conjectures proposed by many authors such as Archdeacon and Graham (and many others, see \cite{ADMS16,BH05,CDOR,CMPP18,Graham71,HR09}). Results on these conjectures have several implications in the constructions of combinatorial designs, graph decompositions and have applications to other combinatorial structures such as the Heffer arrays (see, for instance, \cite{A15}). 
The underlying problem is to provide an ordering to the elements of a given subset $A$ (or multiset) of an abelian group $G$ so that the partial sums define a simple path $P$ or a simple cycle $C$ in the Cayley graph $Cay[G:\pm A]$ (the graph whose vertex set is $G$ and whose edges are the pairs $\{x,y\}$ such that $x-y\in \pm A$) if the sum of the elements is zero. The connection with the difference method is that, here, the elements of $A$ are the differences between adjacent vertices of $P$.

We introduce some definitions and notation (we adopt those of \cite{CDOR}) to make these questions more precise. Let~$G$ be an abelian group and let $A$ be a subset of $G \setminus \{ 0 \}$ whose size is equal to $k$. Let ${\bm \omega} = (a_1, a_2, \ldots, a_k)$ be an ordering of the elements of~$A$ and we define its partial sums ${\bm s} = (s_0, s_1, \ldots, s_k)$ by $s_0=0$ and $s_i = a_1 + \cdots + a_i$ for $i>0$.  We denote the sum of the elements of~$A$ by $\Sigma A$. As $G$ is abelian, for any ordering of the elements of~$A$ the final partial sum $s_k$ is equal to $\Sigma A$.
Then, as done in \cite{AL20} and \cite{CDOR}, we give the following definitions.
\begin{defn}
\begin{itemize}
\item The ordering ${\bm \omega}$ is said to be a {\em sequencing} (or a {\em linear sequencing}) of~$A$ if the elements of ${\bm s}$ are distinct;
\item the ordering ${\bm \omega}$ is said to be an {\em R-sequencing} (or a {\em rotational sequencing}) of~$A$ if the elements of ${\bm s}$ are distinct with the exception that $s_0 = 0 = s_k$;
\item a subset $A$ of an abelian group is said to be {\em sequenceable} if it admits a sequencing or an R-sequencing;
\item an abelian group $G$ is said to be {\em strongly sequenceable} if every subset $A$ of $G\setminus \{0\}$ is sequenceable.
\end{itemize}
\end{defn}
We remark that a set $A$ can have a linear sequencing only if $\Sigma A\not=0$. On the other hand, $A$ can have a rotational sequencing only when $\Sigma A=0$.

We can now state the main conjecture about sequenceability, first suggested with a different terminology in \cite{AL20} (see Conjecture 3.4 therein), that is the amalgamation of several questions and conjectures (see also \cite{ADMS16,BH05,CDOR,CMPP18,Graham71}).
\begin{conj}[Alspach-Liversidge \cite{AL20}]\label{conj:main}
Every abelian group is strongly sequenceable.
\end{conj}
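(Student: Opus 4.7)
The plan is to attack Conjecture \ref{conj:main} by a cascade of reductions down to finite cyclic groups of prime power order, and then to use an algebraic (Combinatorial Nullstellensatz) approach on the resulting cyclic case. Since $A$ is finite, it lies in the finitely generated subgroup $\langle A\rangle \leq G$; by the structure theorem $\langle A \rangle \cong \mathbb{Z}^r \oplus \mathbb{Z}_{m_1} \oplus \cdots \oplus \mathbb{Z}_{m_s}$. Partial-sum collisions in the free part $\mathbb{Z}^r$ are already witnessed in a sufficiently large finite quotient, which lets us replace $G$ by a finite abelian group, and then a Sylow decomposition focuses attention on abelian $p$-groups. To reduce further to the cyclic case I would choose a surjection $\pi : G \to \mathbb{Z}_{p^a}$ onto the largest cyclic factor such that $\pi$ is as close to injective on $A$ as possible, sequence $\pi(A)$ in $\mathbb{Z}_{p^a}$, and then lift fiber by fiber, resolving each fiber of $\pi$ by a greedy insertion in $\ker \pi$ together with an induction on the number of invariant factors.

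For the core cyclic case $G = \mathbb{Z}_n$ I would use the Combinatorial Nullstellensatz, as the keywords of the paper already suggest. Writing $|A| = k$ and introducing variables $x_1,\ldots,x_k$ for the would-be ordering, the obstruction polynomial is
\[
F(x_1,\ldots,x_k) \;=\; \prod_{0 \leq i < j \leq k} \Bigl( \sum_{\ell = i+1}^{j} x_\ell \Bigr),
\]
and a sequencing of $A$ is an assignment $x_i = a_{\sigma(i)}$ for some permutation $\sigma$ such that $F$ does not vanish. For prime $n=p$ one would identify a monomial $\prod x_i^{d_i}$ with $d_i \leq |A|-1$ and nonzero coefficient in $F$ (with a small modification to allow $s_k = 0$ in the R-sequencing case), and then conclude via Alon's Nullstellensatz by letting each $x_i$ range over $A$. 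For composite $n$ one would reduce modulo each prime divisor and recombine via the Chinese Remainder Theorem, taking care that the local sequencings are compatible.

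The hardest step, and the reason Conjecture \ref{conj:main} is open, is the dense regime where $|A|$ is comparable to $|G|$. There the total degree $\binom{k+1}{2}$ of $F$ vastly exceeds the number $k$ of values each $x_i$ may take, so the raw Combinatorial Nullstellensatz simply cannot apply: this is precisely the obstacle the present paper sidesteps by relaxing to the $t$-local condition, which shrinks the degree of $F$ to $O(tk)$. A full proof would therefore require a separate dense-regime strategy, for example attacking the small complement $G \setminus (A \cup \{0\})$ directly when $|A|$ is close to $|G|-1$, or combining a structural/probabilistic argument (concentration over random orderings, switchings, or explicit constructions tied to the arithmetic of $n$) with the Nullstellensatz in the sparse regime. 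Producing a single argument that interpolates uniformly between these two regimes, while simultaneously controlling the lift through mixed and higher-rank torsion in the initial reductions, is what I expect to be the main obstacle.
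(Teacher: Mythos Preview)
There is nothing in the paper to compare against: Conjecture~\ref{conj:main} is stated precisely as an \emph{open} conjecture, and the paper makes no attempt to prove it. The paper's actual results concern only the weakened $t$-local version (Conjectures~\ref{conj:mainWeak} and~\ref{conj:cmppWeak}) for small $t$ in $\mathbb{Z}_p$. So your write-up is not a proof to be checked against the paper's, but a speculative research outline for a well-known open problem --- and you yourself acknowledge this when you identify ``the reason Conjecture~\ref{conj:main} is open'' in the final paragraph.

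Even as an outline, several of the proposed reductions do not work as stated. The reduction from a finite abelian $p$-group to a cyclic quotient via a surjection $\pi:G\to\mathbb{Z}_{p^a}$ is not a genuine reduction: if $\pi$ is not injective on $A$ then the image is a multiset, and there is no known mechanism for ``lifting fiber by fiber'' a sequencing of $\pi(A)$ back to one of $A$ --- the partial-sum constraints couple all positions globally, not locally within fibers. Likewise the ``recombine via the Chinese Remainder Theorem'' step for composite $n$ is a known non-starter: a sequencing modulo each prime power is an ordering of $A$, and there is no reason two such orderings should be the \emph{same} permutation, which is what CRT-compatibility would require. Finally, your obstruction polynomial $F$ is not quite the right object (it includes the factors with $j=i+1$, which are never zero on distinct nonzero inputs, and omits the $\prod_{i<j}(x_j-x_i)$ factor needed to force the $x_i$ to be distinct when each ranges over all of $A$); compare with the polynomial $F_k$ recalled in Section~\ref{sec:poly}. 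The degree obstruction you correctly identify is exactly why the paper retreats to the $t$-weak problem, where the relevant polynomial $Q_{t,\ell}$ has degree independent of $k$.
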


Theorem~\ref{th:known} summarizes the main known results concerning Conjecture~\ref{conj:main}.
\begin{thm}\label{th:known}
Let $G$ be an abelian group of order~$n$ and~$A \subseteq G \setminus \{ 0 \}$ with $|A| =k$.
Then~$A$ is sequenceable in the following cases:
\begin{enumerate}
\item $k \leq 9$~\cite{AL20};
\item $k\leq 12$ when $G$ is cyclic and $n=pt$, where $p$ is prime and $t\leq 4$~(see \cite{HOS19} and \cite{CDOR});
\item $k\leq 12$ when $G$ is cyclic and $n=mt$, where all the prime factors of $m$ are bigger than $k!/2$ and $t\leq 4$~\cite{CDOR};
\item $k = n-3$ when $n$ is prime and $\Sigma A \neq 0$~\cite{HOS19};
\item $k = n-2$ when $G$ is cyclic and $\Sigma A \neq 0$~\cite{BH05};
\item $k = n-1$~\cite{AKP17,Gordon61};
\item $n\leq 21$ and $n\leq 23$ when $\Sigma A=0$~\cite{CMPP18};
\item $n\leq 25$ when G is cyclic and $\Sigma A=0$ ~\cite{ADMS16}.
\end{enumerate}
\end{thm}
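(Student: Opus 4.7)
My plan is to apply Alon's Combinatorial Nullstellensatz. Given $A = \{a_1,\dots,a_k\} \subseteq \mathbb{Z}_p\setminus\{0\}$, identify each ordering of $A$ with a tuple $(x_1,\dots,x_k)\in A^k$ of distinct coordinates. The $t$-weak sequencing condition is that $x_{i+1}+\cdots+x_j\neq 0$ for $0\le i<j\le k$ with $2\le j-i\le t$, the length-$1$ case being automatic since $0\notin A$. I combine distinctness and the window condition into
\[
F_t(x_1,\dots,x_k) \;=\; \prod_{1\le i<j\le k}(x_j-x_i)\cdot\prod_{\substack{0\le i<j\le k\\2\le j-i\le t}}(x_{i+1}+\cdots+x_j),
\]
so that $A$ is $t$-weak sequenceable iff $F_t$ is not identically zero on $A^k$. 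When $A$ contains no pair $\{x,-x\}$, every length-$2$ factor is automatically nonzero on $A^k$; I then drop those factors and work with the reduced polynomial $F_t'$. This reduction by $k-1$ in degree is the mechanism that I expect to let us push $t=6$ to $t=7$ under the no-pair hypothesis.

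By Combinatorial Nullstellensatz, it suffices to exhibit a monomial $\prod_i x_i^{d_i}$ with every $d_i\le k-1$, total degree $\deg F_t$ (respectively $\deg F_t'$), and nonzero coefficient in $\mathbb{F}_p$. A direct count gives $\deg F_t=\binom{k}{2}+(t-1)k-\binom{t}{2}$, and one verifies $\deg F_t \le k(k-1)$ precisely when $k\ge t$. The regime $k\le 9$ is already covered by Theorem~\ref{th:known}(1) (since any sequencing is in particular a $t$-weak sequencing), and $k$ near $p$ is covered by items (4)--(6) of Theorem~\ref{th:known}, so it remains to treat an intermediate range of $k$. I would target a monomial whose exponent profile arises naturally from pairing the Vandermonde expansion $\sum_\sigma \mathrm{sgn}(\sigma)\prod x_i^{\sigma(i)-1}$ with a combinatorially regular rule for choosing one summand from each window factor.

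The main obstacle is evaluating the resulting coefficient. Expanding $F_t$, this coefficient is a signed sum over pairs $(\sigma,\varepsilon)$, where $\sigma\in \Sym_k$ indexes a Vandermonde term and $\varepsilon$ selects one summand from each window factor, restricted to pairs that realise the target exponent profile. My strategy is to construct a sign-reversing involution --- most naturally one that toggles an adjacent transposition in $\sigma$ against a compatible local modification of $\varepsilon$ --- leaving a small family of surviving terms to evaluate explicitly. I expect this calculation to close cleanly for $t\le 6$ in general, and for $t\le 7$ once the length-$2$ factors are removed, producing a closed form of the shape $\pm\prod(\text{factorials})$ that is nonzero modulo $p$ for all but finitely many primes $p$; the remaining small primes would be handled either by hand or by Theorem~\ref{th:known}. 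The bookkeeping grows quickly with $t$, because each new window length introduces new overlap patterns among the factors. Beyond $t=6$ (respectively $t=7$ under the no-pair hypothesis), I expect these overlaps to produce cancellations that this direct Nullstellensatz approach can no longer resolve, which is exactly the dichotomy encoded in the theorem statement.
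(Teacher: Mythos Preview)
Your proposal does not address the stated theorem. Theorem~\ref{th:known} is a survey statement: it collects eight independent results from the literature (each item carries its own citation), and the paper offers no proof of it at all. There is nothing to prove here beyond pointing to \cite{AL20,HOS19,CDOR,BH05,AKP17,Gordon61,CMPP18,ADMS16}.

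What you have actually sketched is an approach to the paper's main results, Theorems~\ref{th:WeakSeqThm} and~\ref{th:WeakSeqThm'} ($t$-weak sequenceability of $\mathbb{Z}_p$ for $t\le 6$, and for $t\le 7$ under the no-pair hypothesis). Even read that way, your plan diverges substantially from the paper's. The paper does \emph{not} attack the full polynomial $F_t$ in $k$ variables. Instead it first greedily fixes the initial $k-\ell$ entries of the ordering (Proposition~\ref{fix}), reducing to a polynomial $H_{k,t,\ell}$ in only $\ell$ free variables; the crucial observation (Remark~\ref{MagicRemark}) is that the top-degree part $Q_{t,\ell}$ of $H_{k,t,\ell}$ is independent of $k$. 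This lets one take $\ell$ around $2t-1$, search \emph{once} (by computer) for a suitable monomial in $Q_{t,\ell}$, and conclude for all sufficiently large $k$ simultaneously. Your proposal, by contrast, keeps all $k$ variables and hopes for a closed-form coefficient via a sign-reversing involution; the paper makes no such combinatorial claim and relies on explicit SageMath computations (Tables~\ref{tab:1}--\ref{tab:4}). Your involution idea is speculative as written: you have not identified the target monomial, the involution, or the surviving fixed points, and the expectation of a product-of-factorials answer is unsupported. If you want to pursue this route for the main theorems, the first concrete step would be to reproduce the paper's reduction to $Q_{t,\ell}$, since otherwise you face a separate coefficient computation for every $k$.
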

Other known results on this conjecture and related problems can be found in \cite{ADMS16, CDOR} and \cite{CP20}.

Inspired by the graph-theoretical interpretation, we propose the following weakening of the concept of sequenceability.
\begin{defn}
\begin{itemize}
\item Given a positive integer $t$ and a set $A$ whose cardinality is $k>t$, the ordering ${\bm \omega}$ is said to be a {\em $t$-weak sequencing} of~$A$ if the elements of ${\bm s}= (s_0, s_1, \ldots, s_k)$ are such that $s_i\not=s_j$ whenever $i\not=j$ and $|i-j|\leq t$;
\item a subset $A$ of an abelian group is said to be {\em $t$-weak sequenceable} if it admits a $t$-weak sequencing;
\item an abelian group $G$ is said to be {\em $t$-weak sequenceable} if every subset $A$ of $G\setminus \{0\}$ whose size is bigger than $t$ is $t$-weak sequenceable.
\end{itemize}
\end{defn}
Indeed, if a set $A$ admits a $t$-weak sequencing and $\Sigma A\not=0$, then the partial sums $(s_0,s_1,\ldots,s_k)$ define a walk in  $Cay[G:\pm A]$ whose girth is strictly bigger than $t$.
Exploiting this interpretation, we can state the analogous of Conjecture \ref{conj:main} for the weak sequenceability.
\begin{conj}\label{conj:mainWeak}
Let $t$ be a positive integer,~$G$ be an abelian group and let~$A$ be a finite subset of~$G \setminus \{ 0 \}$ whose size is $k>t$. Then~$A$ is $t$-weak sequenceable. Equivalently, we conjecture that any abelian group $G$ is $t$-weak sequenceable.
\end{conj}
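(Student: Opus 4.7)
The plan is to attack Conjecture~\ref{conj:mainWeak} first in the arithmetically nicest setting, namely $G = \Z_p$ with $p$ prime, where the ambient group is the field $\F_p$ and the Combinatorial Nullstellensatz becomes available (this is foreshadowed by the paper's keywords). Fix $A = \{\alpha_1, \ldots, \alpha_k\} \subseteq \Z_p \setminus \{0\}$ with $k > t$, introduce formal variables $x_1, \ldots, x_k$ intended to take values in $A$, and write $s_i = x_1 + \cdots + x_i$ with $s_0 = 0$. I would then consider the polynomial
\[
F(x_1, \ldots, x_k) \;=\; \prod_{1 \leq i < j \leq k}(x_j - x_i) \;\cdot\; \prod_{\substack{0 \leq i < j \leq k \\ j-i \leq t}}(s_j - s_i) \;\in\; \F_p[x_1, \ldots, x_k],
\]
whose Vandermonde factor forces the $x_i$'s to be pairwise distinct (hence a permutation of $A$), while the second product encodes exactly the $t$-weak distinctness condition on the partial sums.

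The next step is to apply the Combinatorial Nullstellensatz: it suffices to exhibit a monomial $\prod_i x_i^{c_i}$ with each $c_i \leq k-1$ and $\sum c_i = \deg F$ whose coefficient in $F$ is nonzero in $\F_p$. A direct count gives $\deg F = \binom{k}{2} + tk - \binom{t}{2}$, so in the regime $t \lesssim k/2$ this degree does not exceed $k(k-1)$ and the method is feasible; otherwise one should either multiply by compensating Vandermonde-type factors or appeal to a refined homogeneous version of the nullstellensatz. The coefficient to be extracted then expands as a signed sum over choices of one variable per linear factor, which for each candidate monomial can be organised as a weighted enumeration over combinatorial objects such as permutations or lattice-path-style matchings between factors and exponents.

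The principal obstacle is the evaluation of this coefficient. For small $t$ the second product has only $O(k)$ factors, and one can hope to reduce the sum to a tractable closed-form expression --- this is precisely the range in which the partial results announced in the abstract ($t<7$, or $t<8$ under the extra hypothesis that $A$ avoids pairs $\{x,-x\}$) become accessible. The presence of antipodal pairs creates additional vanishings among the factors $s_j - s_i = x_{i+1}+\cdots+x_j$, and removing them by hypothesis is exactly what permits pushing from $t<7$ to $t<8$. Extending the method uniformly to all $t$ appears to demand a general closed form, or at least a sign-control lemma, for the coefficient --- and this is the decisive barrier. Finally, lifting the argument from $\Z_p$ to arbitrary cyclic, and then to all abelian groups, would most naturally proceed via a reduction lemma (for instance to a prime-order quotient or to the primary components), but the full conjecture in its generality almost certainly requires ideas beyond pure Combinatorial Nullstellensatz.
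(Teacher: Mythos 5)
This statement is a conjecture: the paper does not prove it, and your proposal (correctly) does not claim to either, so the right comparison is between your plan of attack and the paper's partial results (Theorems~\ref{th:WeakSeqThm} and~\ref{th:WeakSeqThm'}, which settle $\mathbb{Z}_p$ for $t\leq 6$, resp.\ $t\leq 7$ under the antipodal-pair restriction). Your starting polynomial is essentially the paper's $P_{k,t}$, with one inefficiency: you include the factors with $j=i+1$, i.e.\ the factors $s_{i+1}-s_i=x_{i+1}$, which are automatically nonzero since $A\subseteq\mathbb{Z}_p\setminus\{0\}$. Keeping them inflates $\deg F$ by $k$ and makes the bounding-monomial condition strictly harder, so they should be dropped, as the paper does. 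Your diagnosis of why the CMPP-type hypothesis gains one value of $t$ is also essentially right: it lets one delete the factors $(x_i+x_{i+1})$ and lowers the degree.

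The genuine gap is in the step you describe as ``reduce the sum to a tractable closed-form expression'' for the top coefficient of the full $k$-variable polynomial. That is precisely the barrier on which the direct Nullstellensatz attack on Conjecture~\ref{conj:main} has been stuck (it is why \cite{HOS19,CDOR} only reach $k\leq 12$), and nothing in your sketch circumvents it: for each $k$ you would face a fresh coefficient computation in $k$ variables, so even for a single small $t$ you would obtain at best finitely many values of $k$. The paper's decisive idea, which your proposal is missing, is to first fix a prefix $(a_1,\dots,a_h)$ greedily (Proposition~\ref{fix}, a counting argument needing only $h\leq k-(t-1)$), divide $P_{k,t}$ by the factors already known to be nonzero, and observe that the resulting polynomial $H_{k,t,\ell}$ in the remaining $\ell=k-h$ variables has top-degree part $Q_{t,\ell}$ that is \emph{independent of $k$} (Remark~\ref{MagicRemark}). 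This collapses the infinite family of coefficient computations to checking two polynomials $Q_{t,\ell}$, $\ell\in\{2t-1,2t\}$, by computer, and is what yields a statement uniform in $k$. Without this reduction your plan cannot deliver the announced results even for $t<7$. Finally, your closing remark about lifting from $\mathbb{Z}_p$ to general cyclic or abelian groups via quotients or primary components is not supported by anything in the paper (which for general $\mathbb{Z}_n$ falls back on a direct construction valid only for $t\leq 3$), and no such reduction is known: distinctness of partial sums does not pass through quotient maps.
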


It is worth also recalling another weaker version of the Alspach-Liversidge Conjecture, presented by Costa, Morini, Pasotti, and Pellegrini (see \cite{CMPP18}) that is sufficient for some applications to Heffter arrays (see also \cite{A15}):
\begin{conj}[CMPP conjecture]\label{conj:cmpp}
Let~$G$ be an abelian group and let~$A$ be a finite subset of~$G \setminus \{ 0 \}$ such that $|A \cap \{ x,-x \}| \leq 1$ for any~$x \in G$. Then~$A$ is sequenceable.
\end{conj}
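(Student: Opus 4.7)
The plan is to attack Conjecture~\ref{conj:cmpp} with the Combinatorial Nullstellensatz, the tool already advertised by this paper's keywords. I would first reduce to $G = \mathbb{Z}_p$ with $p$ a prime larger than $|G|$, so that one works over a field cleanly; the general abelian case is then handled by embedding arguments together with separate treatment of small torsion components.

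Fix $A = \{a_1, \ldots, a_k\} \subset \mathbb{Z}_p \setminus \{0\}$ with $|A \cap \{x,-x\}| \leq 1$. Introduce variables $(x_1, \ldots, x_k)$ encoding the desired ordering, and consider
\[
R(x_1, \ldots, x_k) \;=\; \prod_{\substack{0 \leq i < j \leq k \\ (i,j) \neq (0,k)}} (x_{i+1} + \cdots + x_j) \,\cdot\, \prod_{1 \leq i < j \leq k} (x_j - x_i).
\]
The first factor vanishes exactly when a forbidden partial-sum collision occurs, and the Vandermonde factor forces any nonvanishing assignment in $A^k$ to be a genuine ordering of $A$. By the Combinatorial Nullstellensatz it would then suffice to exhibit a monomial $\prod x_i^{d_i}$ with each $d_i \leq k-1$ having nonzero coefficient in $R$ modulo the vanishing ideal of $A^k$.

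The central difficulty, and the reason the conjecture is still open, is precisely this coefficient computation. The total degree of $R$ is $k^2 - 1$, which exceeds the admissible budget $k(k-1)$ by $k-1$, so one cannot feed $R$ directly to the Nullstellensatz but must reduce modulo the ideal generated by $\prod_{a \in A}(x_i - a)$ and then argue that certain leading monomials survive with nonzero coefficient mod $p$. The hypothesis $|A \cap \{x,-x\}| \leq 1$ is expected to enter at this stage: partial-sum collisions $s_i = s_j$ correspond to zero-sum subsets of $A$, and eliminating zero-sum pairs should kill the symmetric cancellations that otherwise wipe out the coefficient. Even so, the resulting signed sum is exponentially large, which is why Theorem~\ref{th:known} is restricted to small $k$ or to very structured ambient groups.

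The realistic route, and the one I would pursue first, is incremental: rather than attempting the full statement at once, prove the $t$-weak analogue for growing $t$, in the spirit of the present paper. The $t$-weak polynomial has only $O(tk)$ partial-sum factors instead of $\binom{k+1}{2}-1$, which keeps the total degree within the Nullstellensatz budget and pushes the coefficient analysis into a more tractable combinatorial regime. Since the case $t \geq k-1$ recovers the classical statement, extending the weak analysis from constant $t$ to $t$ linear in $k$ would yield Conjecture~\ref{conj:cmpp} as a corollary; passing from the former regime to the latter is exactly the step at which I expect to be blocked.
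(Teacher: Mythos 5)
The statement you were asked about is a conjecture: the paper does not prove it, and neither do you. Your text is a research plan that candidly flags where it would break down, so there is no complete argument to certify; what I can do is point out where the plan itself is already flawed. First, the opening reduction is not available: there is no embedding of a general abelian group (not even of $\mathbb{Z}_n$ for composite $n$) into $\mathbb{Z}_p$ that transports the sequencing problem, and the polynomial method genuinely needs the field structure of $\mathbb{Z}_p$ --- this is exactly why the paper's Theorems~\ref{th:WeakSeqThm} and~\ref{th:WeakSeqThm'} are stated only for $\mathbb{Z}_p$, while the direct constructions of Section~3 are what handle $\mathbb{Z}_n$. Second, your polynomial $R$ is the wrong one, and this misleads you about where the difficulty lies. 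You keep the degree-one factors $x_{i+1}$ (the case $j=i+1$), which are automatically nonzero since $0\notin A$, and you keep the factors $(x_i+x_{i+1})$, which are automatically nonzero under the hypothesis $|A\cap\{x,-x\}|\leq 1$. The entire role of that hypothesis in the polynomial approach is to let you \emph{delete} those $k-1$ consecutive-pair factors (yielding the polynomial $\overline{F}_k=F_k/\prod_{i<k}(x_i+x_{i+1})$ of \cite{HOS19}), thereby lowering the degree; it does not act by ``killing symmetric cancellations'' in a coefficient. After these deletions the degree already fits the Nullstellensatz budget, so no reduction modulo the vanishing ideal is needed or used: the genuine obstruction to the full conjecture is showing that a suitable coefficient is nonzero modulo $p$ for all $k$, not the degree count.

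Your closing suggestion --- prove the $t$-weak analogue and let $t$ grow to $k-1$ --- correctly describes the relationship between weak and full sequenceability, and the $t$-weak relaxation is indeed what this paper pursues. But the mechanism that makes the paper's results uniform in $k$ is Remark~\ref{MagicRemark}/\ref{MagicRemark2}: after fixing a prefix of the ordering, the top-degree part $Q_{t,\ell}$ (resp.\ $\overline{Q}_{t,\ell}$) of the reduced polynomial depends only on $t$ and $\ell$, not on $k$, so a single finite coefficient computation (done in SageMath for $\ell\in\{11,12\}$) covers all large $k$ at once. That trick collapses as soon as $t$ is allowed to grow with $k$, because $\ell$ must satisfy $\ell\geq 2t-1$ (or $2t-3$) and the coefficient computations become a different, unbounded family. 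So the step at which you ``expect to be blocked'' is not merely quantitative; it is the point where the paper's key structural observation stops applying, and the conjecture remains open.
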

Conjecture \ref{conj:cmpp} was presented in \cite{CMPP18} with the additional hypothesis that $\Sigma A=0$ but, since the authors of the present paper together with Anita Pasotti have recently proposed a variation of the Heffter arrays in which the rows and the columns do not sum to zero (see \cite{CostaDellaFiorePasotti} and also \cite{MellaPasotti}), we have stated here this slightly more general conjecture. Then, as done with Conjecture \ref{conj:main}, we propose a variation of the CMPP conjecture for the weak sequenceability.

\begin{conj}\label{conj:cmppWeak}
Let $t$ be a positive integer, ~$G$ be an abelian group and let~$A$ be a finite subset of~$G \setminus \{ 0 \}$ such that $|A \cap \{ x,-x \}| \leq 1$ for any~$x \in G$ and $|A|>t$. Then~$A$ is $t$-weak sequenceable.
\end{conj}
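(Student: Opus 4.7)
The conjecture is stated in full generality for all abelian groups, but the most promising first target, consistent with the abstract and the \emph{Combinatorial Nullstellensatz} keyword, is the cyclic case of prime order. My plan is to prove the statement for $G=\mathbb{Z}_p$ via the polynomial method, using the symmetry-breaking hypothesis $|A\cap\{x,-x\}|\le 1$ to push $t$ one unit further than in the unrestricted case.

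Fix $A=\{a_1,\ldots,a_k\}\subseteq\mathbb{Z}_p\setminus\{0\}$ and introduce variables $x_1,\ldots,x_k$ playing the role of the desired ordering. The $t$-weak sequencing condition $s_i\neq s_j$ for $0\le i<j\le k$ with $j-i\le t$ is precisely the nonvanishing of $x_{i+1}+x_{i+2}+\cdots+x_j$, while the requirement that $(x_1,\ldots,x_k)$ be an ordering of $A$ is that the $x_i$ be pairwise distinct elements of $A$. Bundling both requirements into one polynomial,
\[
F(x_1,\ldots,x_k)\;=\;\prod_{1\le i<j\le k}(x_i-x_j)\;\cdot\prod_{\substack{0\le i<j\le k\\ j-i\le t}}\bigl(x_{i+1}+x_{i+2}+\cdots+x_j\bigr),
\]
any point of $A^k$ at which $F$ is nonzero produces a valid $t$-weak sequencing. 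By the Combinatorial Nullstellensatz it is then enough to exhibit a monomial $x_1^{d_1}\cdots x_k^{d_k}$ with each $d_i\le k-1$ and with nonzero coefficient in $F$ over $\mathbb{F}_p$; the natural candidate is chosen so that each $d_i$ is as close to $k-1$ as possible and $\sum d_i$ matches $\deg F$ exactly.

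The decisive step, and the main obstacle, is the explicit evaluation of this coefficient. It can be rewritten as a signed enumeration over assignments of the linear factors $x_{i+1}+\cdots+x_j$ to the variables, and for small $t$ one can verify by an ad hoc or computer-assisted combinatorial argument that the integer obtained is nonzero modulo every sufficiently large prime $p$ (the small-prime cases are handled separately using $k\le p-1$). The CMPP-style hypothesis $|A\cap\{x,-x\}|\le 1$ precludes certain sign-reversed cancellations in the enumeration, which is precisely the structural feature that should buy the additional unit of $t$ in the second branch of the main result. I anticipate the combinatorics becoming rapidly heavier with $t$, so that $t=6,7$ likely require computer algebra; going substantially beyond $t=7$, or moving away from prime cyclic groups, would seem to demand a genuinely new idea rather than further brute-force coefficient extraction.
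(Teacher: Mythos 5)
First, note that the statement you are proving is stated in the paper as a \emph{conjecture}: the authors do not prove it in general, and their Theorem on this question establishes only the case $G=\mathbb{Z}_p$ with $t\le 7$. Your decision to scope down to prime cyclic groups and small $t$ via the Combinatorial Nullstellensatz is therefore the right instinct and matches the paper's general strategy. However, as a plan for even that partial result, your proposal has two concrete gaps.

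The first gap is that you never actually use the hypothesis $|A\cap\{x,-x\}|\le 1$ in a way that changes the polynomial. In the paper, this hypothesis is exploited by \emph{dividing out} the $k-1$ factors $(x_i+x_{i+1})$ from the partial-sum product: when no two elements of $A$ are negatives of each other, the condition $s_{i-1}\ne s_{i+1}$ holds automatically, so these factors need not be imposed. Deleting them lowers the total degree by $k-1$, which relaxes the degree constraint from $\ell\ge 2t-1$ to $\ell\ge 2t-3$, and this is the \emph{entire} source of the extra unit of $t$ ($t\le 7$ versus $t\le 6$). Your polynomial $F$ retains all factors with $j-i\le t$, including $j-i=2$ (and also the superfluous singleton factors $x_{i+1}$ from $j=i+1$, which are already guaranteed nonzero since $0\notin A$), so in your setup the hypothesis does nothing; the vague claim that it ``precludes sign-reversed cancellations in the enumeration'' misidentifies the mechanism. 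The second gap is that a direct application of the Non-Vanishing Corollary to $F$ requires exhibiting a nonzero coefficient in a polynomial that depends on $k$, i.e.\ one computation for every value of $k$ --- an infinite task that no finite computer-assisted verification can complete. The paper's essential device is to first fix the initial $h=k-\ell$ elements of the ordering by a greedy inductive argument (possible whenever $|A\setminus\{a_1,\dots,a_m\}|$ exceeds the number of forbidden values at each step), and then observe that the top-degree homogeneous part $\overline{Q}_{t,\ell}$ of the remaining polynomial in the last $\ell$ variables does not depend on $k$ at all. This reduces the whole problem, for all $k\ge \ell+t-1$, to finitely many coefficient computations (two values of $\ell$ suffice, the second being needed to handle the primes dividing the first coefficient). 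Without some such reduction your plan cannot yield a statement uniform in $k$.
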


In this paper, we will work on these weak forms of Conjectures \ref{conj:main} and \ref{conj:cmpp}. 
In particular, when $G$ is the field $\mathbb{Z}_p$, we will use a polynomial approach whose starting point is the same as \cite{HOS19}. Here, after some manipulations, surprisingly, we will obtain a polynomial whose expression does not depend on the cardinality of $A$ and this allows us to have a result that is very general on the parameter $k=|A|$. On the other hand, since the degree of this polynomial increases very quickly in $t$, we can resolve, computationally,  using SageMath \cite{SageMath}, only the cases where $t$ is smaller respectively than $7$ and $8$ for Conjectures \ref{conj:mainWeak} and \ref{conj:cmppWeak}. These results will be presented in Section 2 of this paper. We also remark that the polynomial method provides better results than the ones that can be obtained by using the direct construction. 
Indeed, in the third section of this paper we will show how a direct approach, similar to that of \cite{CMPP18}, can effectively solve only very small values of $t$: we can prove, directly and with a great effort, Conjectures \ref{conj:mainWeak} and \ref{conj:cmppWeak} only when $t$ is smaller respectively than $4$ and $5$.

Finally, in the last section, we will outline a probabilistic approach. We will start from the result of \cite{ADMS16} that almost all the sets are sequenceable (fixed $|A|$ and asymptotically in $|G|$) and we will prove the existence of sequences that are not too far from being $t$-weak sequencings.
\section{Applying the Polynomial Method}\label{sec:poly}

In this section, we apply a method that relies on the Non-Vanishing Corollary of the Combinatorial Nullstellensatz, see~\cite{Alon99,Michalek10}.  Given a prime $p$ (in the following $p$ will be always assumed to be a prime), this corollary allows us to obtain a non-zero point to suitable polynomials on $\mathbb{Z}_p$ derived starting from the ones defined in \cite{HOS19}. Then, after some manipulations, surprisingly, we obtain a polynomial whose expression does not depend on the cardinality of $A$ and this allows us to have a result that is very general on the parameter $k=|A|$.

\begin{thm}\label{th:pm}{\rm (Non-Vanishing Corollary)}
Let~$\mathbb{F}$ be a finite field, and let $ f(x_1, x_2, \ldots, x_k)$ be a polynomial in~$\mathbb{F}[x_1, x_2, \ldots, x_k]$. Suppose the degree~$deg(f)$ of~$f$ is $\sum_{i=1}^k \gamma_i$, where each~$\gamma_i$ is a nonnegative integer, and suppose the coefficient of~$\prod_{i=1}^k x_i^{\gamma_i}$ in~$f$ is nonzero. If~$C_1, C_2, \ldots, C_k$ are subsets of~$\mathbb{F}$ with~$|C_i| > \gamma_i$, then there are $c_1 \in C_1, \ldots, c_k \in C_k$ such that~$f(c_1, c_2, \ldots, c_k) \neq 0$.
\end{thm}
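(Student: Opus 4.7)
The plan is to prove the Non-Vanishing Corollary by a standard reduction argument combined with the grid-vanishing lemma, proceeding by contradiction. Suppose $f(c_1,\ldots,c_k)=0$ for every $(c_1,\ldots,c_k)\in C_1\times\cdots\times C_k$. First I would introduce the single-variable polynomials
\[
g_i(x_i)\;=\;\prod_{c\in C_i}(x_i-c)\in\mathbb{F}[x_i],
\]
each of degree $|C_i|$ and vanishing identically on $C_i$. The idea is to replace $f$ by a reduced representative modulo the ideal $\langle g_1,\ldots,g_k\rangle$.

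Next, I would perform iterated polynomial division: whenever a monomial of $f$ contains a factor $x_i^{e}$ with $e\ge|C_i|$, rewrite $x_i^{|C_i|}$ using $g_i(x_i)=0\pmod{g_i}$ as a polynomial in $x_i$ of degree strictly less than $|C_i|$. Carrying this out to completion produces a decomposition
\[
f\;=\;\sum_{i=1}^k h_i(x_1,\ldots,x_k)\,g_i(x_i)\;+\;r(x_1,\ldots,x_k),
\]
where $\deg_{x_i}(r)<|C_i|$ for every $i$, and no step ever increases the total degree. Since $g_i$ vanishes on $C_i$, the residue $r$ also vanishes on the whole grid $C_1\times\cdots\times C_k$.

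The key step is to observe that the coefficient of the distinguished monomial $\prod_{i=1}^k x_i^{\gamma_i}$ is preserved under this reduction. Indeed, by hypothesis $\gamma_i<|C_i|$ for each $i$, so the monomial $\prod x_i^{\gamma_i}$ can never appear in any term $h_i g_i$ of total degree $\le\sum\gamma_i=\deg(f)$: any such term would require $\deg(h_i)\le\sum\gamma_i-|C_i|$, yet multiplication by $g_i$ contributes a factor $x_i^{|C_i|}$, forcing the exponent of $x_i$ in every resulting monomial to be at least $|C_i|>\gamma_i$. Hence the coefficient of $\prod x_i^{\gamma_i}$ in $r$ equals that in $f$, which is nonzero, so $r$ is not the zero polynomial.

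Finally, I would invoke (or prove by a short induction on $k$) the grid lemma: a polynomial $r\in\mathbb{F}[x_1,\ldots,x_k]$ with $\deg_{x_i}(r)<|C_i|$ that vanishes on $C_1\times\cdots\times C_k$ must be identically zero. The induction fixes $x_1,\ldots,x_{k-1}$ at arbitrary values in the corresponding $C_i$'s, obtaining a univariate polynomial in $x_k$ of degree less than $|C_k|$ with $|C_k|$ roots; hence it vanishes, and peeling off coefficients reduces the statement to smaller $k$. This contradicts $r\not\equiv 0$, completing the proof. The only mildly delicate point is the bookkeeping in the third paragraph, ensuring that none of the ``correction'' terms $h_i g_i$ can interfere with the top-degree monomial $\prod x_i^{\gamma_i}$; once that is isolated cleanly, everything else is formal.
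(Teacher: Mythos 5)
The paper does not actually prove this statement: it is quoted as a known result (the Non-Vanishing Corollary of the Combinatorial Nullstellensatz) with citations to Alon and to Micha\l{}ek, so there is no internal proof to compare against. Your argument is essentially Alon's original proof --- reduce $f$ modulo the polynomials $g_i(x_i)=\prod_{c\in C_i}(x_i-c)$, check that the coefficient of the top-degree monomial $\prod_i x_i^{\gamma_i}$ survives the reduction, and finish with the grid-vanishing lemma --- and it is correct in substance; Micha\l{}ek's cited proof is a different, shorter induction on $\deg f$ that factors out $(x_k-c)$ directly, but either route establishes the theorem. One clause of yours is literally false and should be repaired: multiplication by $g_i$ does \emph{not} force every monomial of $h_ig_i$ to have $x_i$-exponent at least $|C_i|$, since $g_i(x_i)=x_i^{|C_i|}+(\text{lower-order terms in } x_i)$ contributes plenty of monomials of smaller $x_i$-degree. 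The correct statement is the one your degree bookkeeping already supports: because $\deg(h_i)\le\deg(f)-|C_i|$, every monomial of $h_ig_i$ of total degree exactly $\deg(f)=\sum_i\gamma_i$ must arise from the leading term $x_i^{|C_i|}$ of $g_i$ and hence has $x_i$-exponent at least $|C_i|>\gamma_i$, while every other monomial of $h_ig_i$ has total degree strictly below $\sum_i\gamma_i$; in either case $\prod_i x_i^{\gamma_i}$ cannot occur. With that one clause rewritten, the proof is complete.
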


In the notation of the Non-Vanishing Corollary,  we call the monomial $x_1^{|C_1| - 1} \cdots x_k^{|C_k| - 1}$ the {\em bounding monomial}. The corollary can be rephrased as requiring the polynomial to include a monomial of maximum degree that divides the bounding monomial (where by ``include" we mean that it has a nonzero coefficient).

To use the Non-Vanishing Corollary we require a polynomial for which the non-zeros correspond to successful solutions to the case of the problem under consideration.
We recall that, in order to attack Conjecture \ref{conj:main}, it was defined, in \cite{HOS19} the following polynomial
$$F_k(x_1,\cdots,x_k):=\prod_{1\leq i<j\leq k} (x_j-x_i) \prod_{\substack{0\leq i<j\leq k \\ j \neq i+1, (i,j) \neq (0,k)}} (x_{i+1}+\cdots+x_j).$$
It is clear that, given $A=\{a_1,\dots,a_k\}\subseteq \mathbb{Z}_p\setminus \{0\}$ of size $k$, $F_k(a_1,\dots,a_k)\not=0$ if and only if the sequence $(a_1,\dots, a_k)$ is a solution to Conjecture \ref{conj:main} for the set $A$.  In other words $A$ is sequenceable if and only if there exists an ordering that we denote, up to relabeling, with $(a_1,\dots, a_k)$ such that $F_k(a_1,\dots, a_k)\not=0$.

With respect to Conjecture \ref{conj:mainWeak}, in addition to requiring that $x_i-x_j\not=0$ for $1\leq i<j\leq k$, we seek an ordering to have no two of its partial sums $s_i, s_j$ equal for $1\leq i<j\leq k$ and $|i-j|\leq t$ (here there is the weakening of Conjecture \ref{conj:main}). Hence, modifying the expression of $F_k$, we define, for $t<k$, the following polynomial
$$P_{k,t}(x_1,\cdots,x_k):=\prod_{1\leq i<j\leq k} (x_j-x_i)\prod_{\substack{0\leq i<j\leq k \\ j-i\leq t, j \neq i+1}} (x_{i+1}+\cdots+x_j).$$
In this case we have that a set $A=\{a_1,\dots,a_k\}\subseteq \mathbb{Z}_p$ of size $k$ is $t$-weak sequenceable if and only if there exists an ordering $(a_1,\dots, a_k)$ of its elements such that $P_{k,t}(a_1,\dots, a_k)\not=0$.

Now, given a set $A=\{a_1,\dots,a_k\}$ of $k$ elements, the idea is to fix, {\em a priori}, the first $h$ elements $(a_1,\dots,a_h)$, where $h$ is not too big, of the ordering in such a way that no of its partial sums $s_i, s_j$ are equal for $1\leq i<j\leq h$ and $|i-j|\leq t$. This can be expressed by requiring that $P_{h,t}(a_1,\cdots,a_h)\not=0$ and we show that this can be done under the hypothesis of the following Proposition.
\begin{prop}\label{fix}
Let $A=\{a_1,\dots,a_k\}\subseteq \Z_p\setminus\{0\}$ be a set of size $k$ and let $h$ and $t$ be positive integers such that $h\leq k-(t-1)$.  Then there exists an ordering of $h$-elements of $A$ that we denote,  up to relabeling,  with $(a_1,\dots,a_h)$, such that
$$P_{h,t}(a_1,\cdots,a_h)\not=0.$$
\end{prop}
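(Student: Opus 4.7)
The plan is to proceed by induction on $h$, building up the ordering greedily one element at a time. The base case $h=1$ is immediate, since the polynomial $P_{1,t}$ is the empty product (equal to $1$), so any $a_1 \in A$ works.

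For the inductive step, I would assume that $(a_1,\ldots,a_{h-1})$ is an ordering of $h-1$ distinct elements of $A$ satisfying $P_{h-1,t}(a_1,\ldots,a_{h-1})\neq 0$, and try to extend it. After substituting these values, the quotient $P_{h,t}/P_{h-1,t}$ becomes a polynomial in the single variable $x_h$ whose factors are exactly the $h-1$ linear factors $(x_h-a_i)$ for $1\leq i\leq h-1$ (from the Vandermonde-type product) together with the factors $(a_{i+1}+\cdots+a_{h-1}+x_h)$ indexed by the pairs $(i,h)$ with $0\leq i\leq h-2$ and $h-i\leq t$, of which there are $\min(h-1,t-1)\leq t-1$. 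Each such factor is linear in $x_h$ and so vanishes at exactly one value, giving at most $(h-1)+(t-1)=h+t-2$ forbidden values for $a_h$.

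The hypothesis $h\leq k-(t-1)$ rewrites as $|A|=k\geq h+t-1>h+t-2$, so some $a_h\in A$ avoids every forbidden value. By construction this $a_h$ differs from each previous $a_i$ (since the Vandermonde factors were included in the count) and satisfies $P_{h,t}(a_1,\ldots,a_h)\neq 0$, closing the induction. The whole argument reduces to careful bookkeeping of the linear factors of $P_{h,t}$ that contain $x_h$, so I do not anticipate any real obstacle; the bound $h\leq k-(t-1)$ is exactly the one needed for the greedy extension to succeed. The only minor case split is that when $h-1<t-1$ the number of sum-factors drops below $t-1$, which only relaxes the constraint. It is worth noting that this direct greedy approach sidesteps a full application of the Combinatorial Nullstellensatz, which will instead be deployed after the first $h$ positions have been fixed.
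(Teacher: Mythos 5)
Your proposal is correct and follows essentially the same route as the paper: a greedy induction on $h$ in which the quotient $P_{h,t}/P_{h-1,t}$ is written as a product of linear factors in the new variable and the forbidden values are counted against $|A|\geq h+t-1$. The only cosmetic difference is that the paper picks $a_h$ from $A\setminus\{a_1,\dots,a_{h-1}\}$ (so the Vandermonde factors are automatically nonzero and only the at most $t-1$ sum-factors need avoiding), whereas you fold the $h-1$ Vandermonde constraints into the same count; the two bookkeepings are equivalent.
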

\begin{proof}
Given $k$ and $t$, we prove this statement by induction on $h$.

BASE CASE: Let $h=1$. Since $P_{1,t}(x)=1$ for any $t$ and for any $x\in A$, the statement is realized for $h=1$.

INDUCTIVE CASE: Let us assume the statement for $h\in \{1,\dots,m\}$ and let us prove it for $h=m+1$ where $m+1\leq k-(t-1)$.
Since the statement is true for $h=m$, there exists an $m$-tuple $(a_1,\dots,a_m)$ such that 
$$P_{m,t}(a_1,\cdots,a_m)\not=0.$$
We note that $$\frac{P_{m+1,t}(a_1,\dots,a_m,x)}{P_{m,t}(a_1,\dots,a_m)}=\prod_{1\leq i<m+1} (x-a_i)\prod_{\substack{0\leq i<m \\m+1-i\leq t}} (a_{i+1}+\cdots+a_m+x).$$
Here, any element $x$ of $A\setminus \{a_1,\dots,a_m\}$ satisfies
$\prod_{1\leq i<m+1} (x-a_i)\not=0$. Hence, to have $\frac{P_{m+1,t}(a_1,\dots,a_m,x)}{P_{m,t}(a_1,\dots,a_m)}\not=0$, it suffice to find $x$ such that $\prod_{\max(0,m+1-t)\leq i<m} (a_{i+1}+\cdots+a_m+x) \neq 0$.

Note that, for each relation $a_{i+1}+\cdots+a_m+x=0$ there is at most one solution $x\in A\setminus\{a_1,\dots,a_m\}$. Since those relations are at most $t-1$, we have at most $t-1$ values $x$ in $A\setminus\{a_1,\dots,a_m\}$ such that $\prod_{\max(0,m+1-t)\leq i<m} (a_{i+1}+\cdots+a_m+x)=0$. We recall that $m+1\leq k-(t-1)$ that is 
$$|A\setminus\{a_1,\dots,a_m\}|=k-m\geq t>t-1.$$
This means that there exists $a_{m+1}\in A\setminus\{a_1,\dots,a_m\}$ such that
$$\frac{P_{m+1,t}(a_1,\dots,a_m,a_{m+1})}{P_{m,t}(a_1,\dots,a_m)}=\prod_{1\leq i<m+1} (a_{m+1}-a_i)\prod_{\max(0,m+1-t)\leq i<m} (a_{i+1}+\cdots+a_{m+1})\not=0.$$
Since $\mathbb{Z}_p$ is a field and due to the inductive hypothesis $P_{m,t}(a_1,\dots,a_m) \neq 0$, we also have that
$$\frac{P_{m+1,t}(a_1,\dots,a_m,a_{m+1})}{P_{m,t}(a_1,\dots,a_m)}\cdot P_{m,t}(a_1,\dots,a_m)=P_{m+1,t}(a_1,\dots,a_{m+1})\not=0 $$
that completes the proof.
\end{proof}
In the following we assume that we have fixed, according to Proposition \ref{fix}, $\{a_1,\dots,$ $a_h\}\subseteq A$ such that $P_{h,t}(a_1,\dots,a_h)\not=0$. We note that every $x\in A\setminus\{a_1,\dots,a_h\}$ is such that $x-a_i\not=0$ for any $i\in \{1,\dots,h\}$. Therefore, it is left to find a nonzero point for the polynomial 
$$\frac{P_{k,t}(a_1,\dots,a_h,x_{h+1},\dots,x_{k})}{P_{h,t}(a_1,\dots,a_h)\prod_{1\leq i\leq h<j\leq k }(x_j-a_i)}.$$
Since the free variables are now $x_{h+1},\dots,x_k$, we set $\ell:=k-h$ and $y_i := x_{i+h}$; here the constrain $h\leq k-(t-1)$ of Proposition \ref{fix} becomes $\ell\geq t-1$. 
Then we denote by $H_{k,t,\ell}$ the polynomial
\begin{equation}\label{definizioneH}H_{k,t,\ell}(y_1,\dots,y_{\ell}):=\frac{P_{k,t}(a_1,\dots,a_{k-\ell},y_{1},\dots,y_{\ell})}{P_{k-\ell,t}(a_1,\dots,a_{k-\ell})\prod_{1\leq i\leq k-\ell;\ 1\leq j\leq \ell }(y_j-a_i)}.\end{equation} Assuming now that $k-\ell\geq t-1$, that is, $k-(t-1)\geq \ell\geq t-1$, we obtain the following expression
\begin{equation*}
H_{k,t,\ell}(y_1,\cdots,y_{\ell})=
\end{equation*}
\begin{equation}\label{espressioneH}
P_{\ell,t}(y_1,\dots,y_{\ell})\prod_{0\leq i\leq t-1;\ 1\leq j\leq t-i-1} (a_{k-\ell}+a_{k-\ell-1}+\cdots+a_{k-\ell-i}+y_1+y_2+\cdots+y_j).
\end{equation}
Now our aim is to apply the Non-Vanishing Corollary (i.e. Theorem \ref{th:pm}) to the polynomial $H_{k,t,\ell}$. At this purpose it is enough to consider the terms of $H_{k,t,\ell}$ of maximal degree in the variables $y_1,\dots,y_{\ell}$ that are the ones where no $a_i$ appears. We denote by $Q_{k,t,\ell}$ the polynomial given by those terms, that is
\begin{equation}\label{espressioneQ}Q_{k,t,\ell}:=P_{\ell,t}(y_1,\dots,y_{\ell})\prod_{\substack{0\leq i\leq t-1 \\ 1\leq j\leq t-i-1}} (y_1+y_2+\cdots+y_j).
\end{equation}
Now we can state the following, simple but  very powerful, remark.
\begin{rem}\label{MagicRemark}
The expression of $Q_{k,t,\ell}$ does not depend on $k$. In the following, we just denote this polynomial by $Q_{t,\ell}$.
\end{rem}
Indeed Remark \ref{MagicRemark} means that, after these manipulations, we are left to consider a polynomial that does not depend on $k=|A|$ and hence we have chances to get a result that is very general on $k$.

To apply the Non-Vanishing Corollary,  we also need to know the degree of $Q_{t, \ell}$ (and hence that of $H_{k, t, \ell}$) in order to compare it with the one of the bounding monomial.
\begin{lem}
$$ \deg(Q_{t,\ell})=(t-1)\ell+\frac{\ell(\ell-1)}{2}.$$
\end{lem}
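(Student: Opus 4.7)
The plan is to read off the degree of $Q_{t,\ell}$ by summing the number of linear factors in each of its three contributing subproducts, since every factor appearing is linear (either a difference $y_j-y_i$ or a partial sum $y_{i+1}+\cdots+y_j$). The three pieces are: the Vandermonde-type product inside $P_{\ell,t}$, the partial-sum product inside $P_{\ell,t}$, and the extra partial-sum product indexed by $(i,j)$ with $0\le i\le t-1$ and $1\le j\le t-i-1$ that multiplies $P_{\ell,t}$ in \eqref{espressioneQ}.

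First I would compute $\deg(P_{\ell,t})$. The Vandermonde piece contributes $\binom{\ell}{2}=\frac{\ell(\ell-1)}{2}$ linear factors. For the second piece, I index pairs by the gap $d=j-i$; the conditions force $d\in\{2,3,\ldots,t\}$, and for each such $d$ the pairs $(i,i+d)$ with $0\le i\le \ell-d$ account for $\ell-d+1$ factors (using $\ell\ge t-1$, so $\ell-d+1\ge 0$ throughout, with the convention that negative terms simply do not arise when $\ell=t-1$ since then $d=t$ yields no pairs). Summing,
\begin{equation*}
\sum_{d=2}^{t}(\ell-d+1)=\sum_{e=\ell-t+1}^{\ell-1} e=\frac{(t-1)(2\ell-t)}{2}.
\end{equation*}
So $\deg(P_{\ell,t})=\frac{\ell(\ell-1)}{2}+\frac{(t-1)(2\ell-t)}{2}$.

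Next I would count the degree of the additional product $\prod_{0\le i\le t-1,\; 1\le j\le t-i-1}(y_1+\cdots+y_j)$ appearing in \eqref{espressioneQ}. For each fixed $i$ there are $t-1-i$ linear factors, so the total number is $\sum_{i=0}^{t-1}(t-1-i)=\frac{t(t-1)}{2}$.

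Finally I would add the three contributions and simplify. The sum of the last two pieces collapses cleanly:
\begin{equation*}
\frac{(t-1)(2\ell-t)}{2}+\frac{t(t-1)}{2}=\frac{(t-1)(2\ell-t+t)}{2}=(t-1)\ell,
\end{equation*}
which combined with the Vandermonde contribution $\frac{\ell(\ell-1)}{2}$ yields exactly $\deg(Q_{t,\ell})=(t-1)\ell+\frac{\ell(\ell-1)}{2}$. There is no real obstacle here; the only thing to be mindful of is the boundary $\ell=t-1$, where one must check that the term $d=t$ in the gap-sum simply vanishes and does not spoil the closed form, which it does not.
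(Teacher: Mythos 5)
Your proof is correct and uses the same basic idea as the paper: the degree is just the number of linear factors. The only (cosmetic) difference is bookkeeping — the paper counts in $H_{k,t,\ell}$, observing that each variable $y_j$ is the terminal variable of exactly $t-1$ partial-sum factors so that the count $(t-1)\ell+\binom{\ell}{2}$ is immediate, whereas you count the two partial-sum index sets in $Q_{t,\ell}$ separately and check that $\frac{(t-1)(2\ell-t)}{2}+\frac{t(t-1)}{2}=(t-1)\ell$; both are fine, and your attention to the boundary case $\ell=t-1$ is a nice touch.
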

\begin{proof}
It is more convenient to consider the degree of the polynomial $H_{k,t,\ell}$ defined in eq.~\eqref{espressioneH} since that of $Q_{t,\ell}$ is the same.

We note that, in $H_{k,t,\ell}$, each variable $y_j$ is the ending point of $t-1$ terms whose length is $i+1$ and that are of the form $(y_j+y_{j-1}+\dots+y_{j-i})$ or, if $i\geq j$, $(y_j+y_{j-1}+\dots+y_1+a_{k-\ell}+\dots+a_{k-\ell-(i-j)})$.

The other terms of $H_{k,t,\ell}$ are the one of the form $(y_j-y_i)$ where $\ell\geq j> i\geq 1$. Therefore we have that
$$\deg(H_{k, t,\ell})=(t-1)\ell+\frac{\ell(\ell-1)}{2}.$$
\end{proof}
To apply Theorem \ref{th:pm} we would need to find a nonzero coefficient of some monomials of type $\prod_{i=1}^{\ell} y_i^{\gamma_i}$ in $Q_{t,\ell}$ where each $\gamma_i$ is smaller than the number of choices for $y_i$, i.e. $\gamma_i\leq\ell-1$. Note that this is possible only if $\deg(Q_{t,\ell})\leq\ell(\ell-1)$: indeed this is the degree of the bounding monomial. Therefore we need that
$$(t-1)\ell+\frac{\ell(\ell-1)}{2}\leq \ell(\ell-1)$$
that is $2t-1\leq \ell$. Recalling that we are assuming $k-(t-1)\geq \ell\geq t-1$, the conditions on $\ell$ to apply Theorem \ref{th:pm} to the polynomial $H_{k,t,\ell}$ defined in eq. \eqref{espressioneH} are that
\begin{equation}\label{conditionELL}k-(t-1)\geq \ell\geq 2t-1.\end{equation}
From the previous discussion, it follows that
\begin{prop}\label{th:conditionWeakSeq}
Let $t,\ell$ be positive integers such that $\ell\geq 2t-1$. Let us also suppose that the coefficient of $\Pi_{i=1}^\ell y_i^{\gamma_i}$ in $Q_{t,\ell}$ is nonzero in $\mathbb{Z}_p$ where $\gamma_i\leq \ell-1$ for every $i\in \{1,\dots,\ell\}$. 

Then, any subset $A$ of $\mathbb{Z}_p\setminus \{0\}$ whose size is $k\geq \ell+(t-1)$ is $t$-weak sequenceable.
\end{prop}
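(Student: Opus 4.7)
The proof should be a direct assembly of the machinery built up in the preceding pages, so let me describe the pieces in the order I would use them.

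First, given $A \subseteq \mathbb{Z}_p \setminus \{0\}$ of size $k \geq \ell + (t-1)$, I would apply Proposition \ref{fix} with $h = k - \ell$; the hypothesis $h \leq k - (t-1)$ of that proposition translates to $\ell \geq t-1$, which is implied by the assumption $\ell \geq 2t-1$. This produces an ordered tuple, which, after relabeling, I call $(a_1,\dots,a_{k-\ell})$, with the property that $P_{k-\ell,t}(a_1,\dots,a_{k-\ell}) \neq 0$. It now remains to order the $\ell$ leftover elements $A \setminus \{a_1,\dots,a_{k-\ell}\}$ into a tuple $(a_{k-\ell+1},\dots,a_k)$ so that the full evaluation $P_{k,t}(a_1,\dots,a_k) \neq 0$, which by the definition of $t$-weak sequencing is exactly what we want.

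To find such an ordering, I would work with the reduced polynomial $H_{k,t,\ell}(y_1,\dots,y_\ell)$ defined in \eqref{definizioneH}, and apply the Non-Vanishing Corollary (Theorem \ref{th:pm}) with $C_1 = C_2 = \dots = C_\ell := A \setminus \{a_1,\dots,a_{k-\ell}\}$, a set of size $\ell$. The bounding monomial is $\prod_{i=1}^{\ell} y_i^{\ell-1}$, of total degree $\ell(\ell-1)$. From expression \eqref{espressioneH} one checks that $\deg(H_{k,t,\ell}) = (t-1)\ell + \ell(\ell-1)/2$, and the homogeneous top-degree component in the $y_i$'s is precisely $Q_{t,\ell}$ from \eqref{espressioneQ}; under $\ell \geq 2t-1$ this degree is at most $\ell(\ell-1)$, so the bounding monomial can in principle dominate. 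The hypothesis of the proposition supplies an exponent vector $(\gamma_1,\dots,\gamma_\ell)$ with each $\gamma_i \leq \ell-1$ such that the coefficient of $\prod y_i^{\gamma_i}$ in $Q_{t,\ell}$ is nonzero in $\mathbb{Z}_p$; since $\sum \gamma_i = \deg(Q_{t,\ell}) = \deg(H_{k,t,\ell})$, this is also the coefficient of that monomial in $H_{k,t,\ell}$, and $\gamma_i < |C_i|$. Theorem \ref{th:pm} then delivers $a_{k-\ell+1},\dots,a_k \in A \setminus \{a_1,\dots,a_{k-\ell}\}$ with $H_{k,t,\ell}(a_{k-\ell+1},\dots,a_k) \neq 0$.

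Finally, I would unwind the definition \eqref{definizioneH}: the denominator $P_{k-\ell,t}(a_1,\dots,a_{k-\ell}) \prod_{i \leq k-\ell < j}(a_j - a_i)$ is nonzero (the first factor by construction, the second because each $a_j$ for $j > k-\ell$ lies outside $\{a_1,\dots,a_{k-\ell}\}$), so $P_{k,t}(a_1,\dots,a_k) \neq 0$. The factor $\prod_{1 \leq i < j \leq k}(a_j - a_i)$ inside $P_{k,t}$ being nonzero also confirms that all the chosen $a_{k-\ell+1},\dots,a_k$ are distinct, so $(a_1,\dots,a_k)$ is a genuine ordering of $A$. I expect there is no real obstacle here beyond bookkeeping; the delicate work was already carried out in Proposition \ref{fix}, in distilling $Q_{t,\ell}$ from $H_{k,t,\ell}$ via Remark \ref{MagicRemark}, and in verifying the degree bound $\ell \geq 2t-1$ which makes the bounding monomial large enough to accommodate the top-degree part of $H_{k,t,\ell}$.
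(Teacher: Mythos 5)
Your proposal is correct and follows essentially the same route as the paper, which does not write out a separate proof but derives the proposition directly from the preceding discussion (Proposition \ref{fix} with $h=k-\ell$, the identification of $Q_{t,\ell}$ as the homogeneous top-degree part of $H_{k,t,\ell}$, and the Non-Vanishing Corollary applied with all $C_i$ equal to the $\ell$ leftover elements). Your additional remarks — that the hypothesis $k\geq \ell+(t-1)$ is what validates expression \eqref{espressioneH}, and that the factor $\prod_{i<j}(y_j-y_i)$ inside $P_{\ell,t}$ forces the chosen values to be distinct so that one genuinely obtains an ordering of $A$ — are exactly the bookkeeping the paper leaves implicit.
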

\begin{rem}\label{CasiPiccoli} Since to apply Proposition \ref{fix} we need $\ell\geq t-1$, it is not guaranteed that we can find $\ell$ such that $k-\ell\geq t-1$. If this condition it is not satisfied, we can still define the polynomial $H_{k,t,\ell}$ via equation \eqref{definizioneH} even though equations \eqref{espressioneH} and \eqref{espressioneQ} do not hold. Then, since given $t$  those cases are only a finite number, we can apply directly the Non-Vanishing Corollary to equation \eqref{definizioneH}.
\end{rem}
Now we are ready to prove the main result of this paper
\begin{thm}\label{th:WeakSeqThm}
Let $t\leq 6$ be a positive integer, then for any prime $p$ the field $\mathbb{Z}_p$ is $t$-weak sequenceable.
\end{thm}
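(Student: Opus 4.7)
The plan is to reduce the theorem to a small number of explicit polynomial coefficient computations by combining Proposition~\ref{th:conditionWeakSeq} with Remark~\ref{CasiPiccoli}. The case $t = 1$ is vacuous, since a $1$-weak sequencing only requires consecutive partial sums to differ, which holds automatically because $0 \notin A$. For each $t \in \{2, 3, 4, 5, 6\}$, I would choose $\ell = 2t - 1$ (the smallest value allowed by the constraint $\ell \geq 2t-1$) and ask SageMath to expand the $\ell$-variable polynomial $Q_{t, \ell}$ and locate an admissible monomial $\prod_{i=1}^{\ell} y_i^{\gamma_i}$ with $\gamma_i \leq \ell - 1$. At this extremal choice $\ell = 2t-1$ the total degree $(t-1)\ell + \ell(\ell-1)/2$ of $Q_{t,\ell}$ coincides with the degree $\ell(\ell-1)$ of the bounding monomial, so the exponents $\gamma_i$ are forced to equal $\ell - 1 = 2t - 2$ for every $i$, and there is a unique candidate coefficient to compute. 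Whenever this integer coefficient is nonzero modulo $p$, Proposition~\ref{th:conditionWeakSeq} yields the $t$-weak sequenceability of every $A \subseteq \mathbb{Z}_p \setminus \{0\}$ of size $k \geq \ell + (t-1) = 3t - 2$.

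For the finitely many remaining pairs $(t, k)$ with $t < k < 3t - 2$, I would invoke Remark~\ref{CasiPiccoli}: the polynomial $H_{k, t, \ell}$ from equation~\eqref{definizioneH} is still well defined even when the simplified form~\eqref{espressioneH} is unavailable, and one applies the Non-Vanishing Corollary directly to it. Since Theorem~\ref{th:known}(1) already covers all $k \leq 9$, for $t = 6$ the residual list shrinks to $k \in \{10, \ldots, 15\}$, each a finite computation; the smaller values of $t$ leave correspondingly shorter lists.

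The principal obstacle is the size of the symbolic computation for the largest $t$: $Q_{6, 11}$ has degree $110$ in $11$ variables, so extracting the coefficient of $y_1^{10} \cdots y_{11}^{10}$ strains a naive monomial expansion and will likely require either a structured/recursive implementation or a sampling argument to force SageMath to finish in reasonable time. A secondary concern is that the computed integer coefficient might happen to be divisible by some prime $p$, in which case that particular $p$ needs a separate argument; the natural fallbacks are either to enlarge $\ell$ beyond $2t-1$ (which enlarges the family of admissible monomials and so offers alternative coefficients) or to exploit the inequality $|A| \leq p - 1$, which for small bad primes pushes $k$ back into the range already handled by Theorem~\ref{th:known}.
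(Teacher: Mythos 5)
Your plan is structurally the same as the paper's: use Proposition~\ref{th:conditionWeakSeq} with the single forced monomial $y_1^{\ell-1}\cdots y_\ell^{\ell-1}$ of $Q_{t,2t-1}$ for large $k$, and dispose of the finitely many small $k$ by applying the Non-Vanishing Corollary directly to $H_{k,t,\ell}$ as in Remark~\ref{CasiPiccoli}. (The paper trims your residual list $k\in\{10,\dots,15\}$ to $\{13,\dots,15\}$ by citing the sequenceability of all subsets of $\mathbb{Z}_p$ of size at most $12$, item~(2) of Theorem~\ref{th:known}; that is only a matter of how many computations you run.)

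The one step that fails as written is the boundary value $k=3t-2=16$ for $t=6$. That $k$ is reached by $Q_{6,11}$ only when $p$ does not divide the coefficient of $y_1^{10}\cdots y_{11}^{10}$, which turns out to be $-3^4\cdot 5\cdot 47\cdot 97\cdot 271\cdot 15985681$, and neither of your two fallbacks rescues the bad primes: enlarging $\ell$ to $2t=12$ raises the threshold of Proposition~\ref{th:conditionWeakSeq} to $k\geq\ell+t-1=17$, so $k=16$ escapes; and since $k=16$ already forces $p\geq 17$ while the relevant prime factors $47,97,271,15985681$ all exceed $17$, the inequality $|A|\leq p-1$ excludes nothing. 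Your residual list is defined by $k<3t-2$, so it also misses $k=16$, and finitely many large primes are left uncovered there. The paper closes exactly this hole by computing a second coefficient, namely that of a monomial of $H_{16,6,12}$ (Table~\ref{tab:1}), which equals $-379\cdot 167938950753577$ and shares no prime factor with the first, so that for every $p$ at least one of the two computations applies. The repair fits inside your own framework --- add $k=3t-2$ to the cases treated directly via $H_{k,t,\ell}$ and check that the alternative coefficient is coprime to the $Q_{t,2t-1}$ one --- but it must be done explicitly.
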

\begin{proof}
Since a $t$-weak sequenceable group is also $(t-1)$-weak sequenceable, we can suppose that $t = 6$.  By \cite{CDOR} we know that each subset $A \subseteq \mathbb{Z}_p \setminus \{0\}$ of size $k \leq 12$ is sequenceable,  therefore we can suppose that $k \geq 13$.  We divide the proof considering two different ranges of $k$.  

For each $13 \leq k \leq 15$ and for $k=16$ when $p$ is coprime with $379 \cdot 167938950753577$, the polynomial $H_{k, t, \ell}$, defined in eq. \eqref{definizioneH}, for ${\ell \in \{2t-1, 2t\}}$ has monomials with non-zero coefficients that divide the bounding monomial $y_1 ^{\ell-1} y_2 ^{\ell-1} \cdots y_{\ell}^{\ell-1}$, see Table \ref{tab:1}.  Then thanks to the Non-Vanishing Corollary each subset  $A \subseteq \mathbb{Z}_p$,  $|A| \leq 15$ (or $16$ under the above assumption), is sequenceable.

For $k \geq 17$ and for $k=16$ when $p$ is coprime with $3^4 \cdot 5 \cdot 47 \cdot 97 \cdot 271 \cdot 15985681$,  we consider the polynomials $Q_{t, \ell}$ defined in eq. \eqref{espressioneQ} for $\ell \in \{ 11, 12 \}$.  Since these polynomials have monomials that divide the bounding monomial with non-zero coefficients (see Table \ref{tab:2}),  and since $k \geq \ell + t - 1$,  we can apply Proposition \ref{th:conditionWeakSeq} to obtain that each subset $A \subseteq \mathbb{Z}_p$, $|A| \geq 17$ (or $16$ under the above assumption), is sequenceable. 

\begin{longtable}{lllll}
\caption{Monomials and their coefficients sufficient for the proof of Theorem~\ref{th:WeakSeqThm} in the case $A \subseteq \mathbb{Z}_p$,  $|A| \leq 16$.}\label{tab:1}\\
	\hline
	$k$ & $\ell$ & deg & monomial/s & coefficient/s \\
	\hline
	$16$ & $12$ & $125$ & $y_1^{5} y_2^{10} y_3^{11} y_4^{11} y_5^{11} y_6^{11} y_7^{11} y_8^{11} y_9^{11} y_ {11}^{11} y_{11}^{11} y_{12}^{11}$ &  $-379 \cdot 167938950753577$ \\
	\hline
	$15$ & $11$ & $109$ & \begin{tabular}{@{}l@{}} $y_1^9 y_2^{10} y_3^{10} y_4^{10} y_5^{10} y_6^{10} y_7^{10} y_8^{10} y_9^{10} y_ {10}^{10} y_{11}^{10}$ \\  $y_1^{10} y_2^{9} y_3^{10} y_4^{10} y_5^{10} y_6^{10} y_7^{10} y_8^{10} y_9^{10} y_ {10}^{10} y_{11}^{10}$ \end{tabular} &  \begin{tabular}{@{}l@{}}  $-3^4 \cdot 5 \cdot 47 \cdot 97 \cdot 271 \cdot 15985681$ \\ $-2^2 \cdot 3 \cdot 401 \cdot 1305987719053$ \end{tabular} \\
	\hline
	$14$ & $11$ & $107$ & \begin{tabular}{@{}l@{}} $y_1^7 y_2^{10} y_3^{10} y_4^{10} y_5^{10} y_6^{10} y_7^{10} y_8^{10} y_9^{10} y_ {10}^{10} y_{11}^{10}$ \\  $y_1^{8} y_2^{9} y_3^{10} y_4^{10} y_5^{10} y_6^{10} y_7^{10} y_8^{10} y_9^{10} y_ {10}^{10} y_{11}^{10}$ \end{tabular} &  \begin{tabular}{@{}l@{}}  $-2^2 \cdot 3 \cdot 5 \cdot 7^2 \cdot 37 \cdot 433 \cdot 81945547$ \\ $-3 \cdot 5 \cdot 555349 \cdot 496867859$ \end{tabular} \\
	\hline
	$13$ & $11$ & $104$ & \begin{tabular}{@{}l@{}} $y_1^5 y_2^9 y_3^{10} y_4^{10} y_5^{10} y_6^{10} y_7^{10} y_8^{10} y_9^{10} y_ {10}^{10} y_{11}^{10}$ \\  $y_1^{6} y_2^{8} y_3^{10} y_4^{10} y_5^{10} y_6^{10} y_7^{10} y_8^{10} y_9^{10} y_ {10}^{10} y_{11}^{10}$ \end{tabular} &  \begin{tabular}{@{}l@{}}  $-2 \cdot 11 \cdot 946021 \cdot 34341337$ \\ $-7 \cdot 211 \cdot 73019 \cdot 7962769$ \end{tabular} \\
 	\hline
\end{longtable}

\begin{longtable}{llll}
\caption{Monomials and their coefficients sufficient for the proof of Theorem~\ref{th:WeakSeqThm} in the case $A \subseteq \mathbb{Z}_p$,  $|A| \geq 16$}\label{tab:2}\\
	\hline
	$\ell$ & deg & monomial/s & coefficient/s \\
	\hline
	\endhead
	$11$ & $110$ & $y_1^{10} y_2^{10} y_3^{10} y_4^{10} y_5^{10} y_6^{10} y_7^{10} y_8^{10} y_9^{10} y_ {10}^{10} y_{11}^{10}$ & $-3^4 \cdot 5 \cdot 47 \cdot 97 \cdot 271 \cdot 15985681$\\
	\hline
	$12$ & $126$ & $y_1^{6} y_2^{10} y_3^{11} y_4^{11} y_5^{11} y_6^{11} y_7^{11} y_8^{11} y_9^{11} y_ {11}^{11} y_{11}^{11} y_{12}^{11}$ &  $-379 \cdot 167938950753577$\\
	\hline
\end{longtable}
\end{proof}
\subsection{Polynomial method for Conjecture \ref{conj:cmppWeak}}
We recall that, in order to attack Conjecture \ref{conj:cmpp}, it was defined, in \cite{HOS19} the following polynomial
$$\overline{F}_k(x_1,\cdots,x_k):=\frac{F_k(x_1,\cdots,x_k)}{\prod_{1\leq i< k} (x_i+x_{i+1})}.$$
Indeed, given $A=\{a_1,\dots,a_k\}\subseteq \mathbb{Z}_p\setminus \{0\}$ of size $k$ and such that $|A \cap \{ x,-x \}| \leq 1$ for any~$x \in  \mathbb{Z}_p$, we do not need to impose that $x_i+x_{i+1}$ is different from zero.
Therefore $\overline{F}_k(a_1,\dots,a_k)\not=0$ if and only if the sequence $(a_1,\dots, a_k)$ is a solution to Conjecture \ref{conj:cmpp} for the set $A$. Under the above assumptions, we can also say that $A$ is sequenceable if and only if there exists an ordering that we denote, up to relabeling, with $(a_1,\dots, a_k)$ such that $\overline{F}_k(a_1,\dots, a_k)\not=0$.

Reasoning in a similar way, with respect to Conjecture \ref{conj:cmppWeak}, we define, for $t<k$, the following polynomial
$$\overline{P}_{k,t}(x_1,\cdots,x_k):=\frac{P_{k,t}(x_1,\cdots,x_k)}{\prod_{1\leq i< k} (x_i+x_{i+1})}.$$
In this case we have that a set $A=\{a_1,\dots,a_k\}\subseteq \mathbb{Z}_p$ of size $k$ and such that $|A \cap \{ x,-x \}| \leq 1$ for any~$x \in \mathbb{Z}_p$ is $t$-weak sequenceable if and only if there exists an ordering $(a_1,\dots, a_k)$ of its elements such that $\overline{P}_{k,t}(a_1,\dots, a_k)\not=0$.

Then, with the same proof of Proposition \ref{fix}, and keeping in mind that we do not need to impose that $x_i+x_{i+1}\not=0$, we obtain that:
\begin{prop}\label{fix2}
Let $A=\{a_1,\dots,a_k\}\subseteq \Z_p\setminus\{0\}$ be a set of size $k$ such that $|A \cap \{ x,-x \}| \leq 1$ for any~$x \in \mathbb{Z}_p$, and let $h$ and $t$ be positive integers.  Then, if $h\leq k-(t-2)$,  there exists an ordering of $h$-elements of $A$ that we denote,  up to relabeling,  with $(a_1,\dots,a_h)$, such that
$$\overline{P}_{h,t}(a_1,\cdots,a_h)\not=0.$$
\end{prop}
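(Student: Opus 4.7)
The proof will follow exactly the inductive template of Proposition~\ref{fix}, tracking carefully which factors of $\overline{P}_{k,t}$ involve the newly added variable and how many more must be ruled out. The plan is to induct on $h$, with base case $h=1$ trivial since $\overline{P}_{1,t}(x)=1$ (no $(x_j-x_i)$ factors, no long sums, and the removed product $\prod_{1\le i<k}(x_i+x_{i+1})$ is empty when $k=1$).

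For the inductive step, assume we have $(a_1,\dots,a_m)$ with $\overline{P}_{m,t}(a_1,\dots,a_m)\neq0$, and seek $a_{m+1}\in A\setminus\{a_1,\dots,a_m\}$ with $\overline{P}_{m+1,t}(a_1,\dots,a_m,a_{m+1})\neq0$. First I would compute the ratio
\[
\frac{\overline{P}_{m+1,t}(a_1,\dots,a_m,x)}{\overline{P}_{m,t}(a_1,\dots,a_m)}
=\prod_{1\le i\le m}(x-a_i)\prod_{\substack{0\le i\le m-2\\ m+1-i\le t}}(a_{i+1}+\cdots+a_m+x),
\]
where the \emph{crucial} difference with Proposition~\ref{fix} is that the index $i=m-1$ (which would produce the factor $a_m+x$, i.e.~$x_m+x_{m+1}$) is now excluded, because that factor is exactly the one removed in passing from $P$ to $\overline{P}$. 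Thus the second product runs over at most $t-2$ values of $i$ rather than $t-1$.

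Each factor $(x-a_i)$ is nonzero for $x\in A\setminus\{a_1,\dots,a_m\}$ by choice, and each of the at most $t-2$ linear relations $a_{i+1}+\cdots+a_m+x=0$ has a unique solution in $\mathbb{Z}_p$, so it forbids at most one element of $A$. Consequently, at most $t-2$ elements of $A\setminus\{a_1,\dots,a_m\}$ are bad. The hypothesis $m+1\le k-(t-2)$ rewrites as $|A\setminus\{a_1,\dots,a_m\}|=k-m\ge t-1>t-2$, so a good $a_{m+1}$ exists, and multiplying the nonzero ratio by the nonzero inductive value gives $\overline{P}_{m+1,t}(a_1,\dots,a_{m+1})\neq0$.

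There is essentially no novel obstacle: the entire argument is a bookkeeping variant of Proposition~\ref{fix}, and the CMPP hypothesis $|A\cap\{x,-x\}|\le 1$ is what legitimises the reduction from $P$ to $\overline{P}$ in the first place (since then $x_i+x_{i+1}$ never vanishes on admissible orderings, so dividing it out loses no information). The only point that deserves a brief verification is that the index set $\{i:0\le i\le m-2,\ m+1-i\le t\}$ indeed has cardinality at most $t-2$ for every $m\ge 1$, including the small-$m$ regime where the range is further truncated; this is immediate since $(m-2)-\max(0,m+1-t)+1\le t-2$.
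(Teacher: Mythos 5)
Your proposal is correct and is exactly the argument the paper intends: the paper simply states that Proposition~\ref{fix2} follows ``with the same proof of Proposition~\ref{fix}, keeping in mind that we do not need to impose that $x_i+x_{i+1}\neq 0$'', and your careful bookkeeping (the excluded index $i=m-1$ reducing the number of forbidden values from $t-1$ to $t-2$, hence the relaxed bound $h\le k-(t-2)$) is precisely the verification being left to the reader. Nothing further is needed.
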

Here we can assume that we have fixed, according to Proposition \ref{fix2}, $\{a_1,\dots,a_h\}\subseteq A$ such that $\overline{P}_{h,t}(a_1,\dots,a_h)\not=0$. Then, proceeding as we did with Conjecture \ref{conj:mainWeak}, we have that if we set $\ell=k-h$ and $\ell\geq t-2$ it is enough to find $y_1,\dots,y_{\ell}$ in $A\setminus \{a_1,\dots,a_{k-\ell}\}$ such that $\overline{H}_{k,t,\ell}(y_1,\dots,y_{\ell})\not=0$ where
\begin{equation}\label{definizioneH'}\overline{H}_{k,t,\ell}(y_1,\dots,y_{\ell}):=\frac{H_{k,t,\ell}(y_{1},\dots,y_{\ell})}{ (y_1 + a_{k-\ell}) \prod_{1\leq i< k} (y_i+y_{i+1})}.\end{equation} 

Assuming now that $k-\ell\geq t-1$, that is, $k-(t-1)\geq \ell\geq t-2$, we obtain the following expression
\begin{equation*}
\overline{H}_{k,t,\ell}(y_1,\cdots,y_{\ell})=
\end{equation*}
\begin{equation}\label{espressioneH'}
\overline{P}_{\ell,t}(y_1,\dots,y_{\ell})\prod_{\substack{ 0\leq i\leq t-1;\ 1\leq j\leq t-i-1\\ i+j>1}} (a_{k-\ell}+a_{k-\ell-1}+\cdots+a_{k-\ell-i}+y_1+y_2+\cdots+y_j).
\end{equation}
Here our aim is to apply the Non-Vanishing Corollary (i.e. Theorem \ref{th:pm}) to the polynomial $\overline{H}_{k,t,\ell}$. At this purpose it is enough to consider the terms of $\overline{H}_{k,t,\ell}$ of maximal degree in the variables $y_1,\dots,y_{\ell}$ that are the ones where no $a_i$ appears. We denote by $\overline{Q}_{k,t,\ell}$ the polynomial given by those terms, that is
\begin{equation}\label{espressioneQ'}\overline{Q}_{k,t,\ell}:=\overline{P}_{\ell,t}(y_1,\dots,y_{\ell})\prod_{\substack{ 0\leq i\leq t-1;\ 1\leq j\leq t-i-1\\ i+j>1}} (y_1+y_2+\cdots+y_j).
\end{equation}
Also in this case we can state this, simple but very powerful, remark.
\begin{rem}\label{MagicRemark2}
The expression of $\overline{Q}_{k,t,\ell}$ does not depend on $k$. In the following, we just denote this polynomial by $\overline{Q}_{t,\ell}$.
\end{rem}
In this case the degree of $\overline{Q}_{t, \ell}$ (and that of $\overline{H}_{k,t,\ell}$) is 
$$ \deg(\overline{Q}_{t,\ell})=(t-2)\ell+\frac{\ell(\ell-1)}{2}.$$
This means that, in order to apply the Non-Vanishing Corollary (i.e. Theorem \ref{th:pm}) to the polynomial $\overline{H}_{k,t,\ell}$ defined in eq. \eqref{espressioneH'}, we need that
\begin{equation}\label{conditionELL'}k-(t-1)\geq \ell\geq 2t-3.\end{equation}
From the previous discussion, it follows that
\begin{prop}\label{th:conditionWeakSeq'}
Let $t,\ell$ be positive integers such that $\ell\geq 2t-3$. Let us also suppose that the coefficient of $\Pi_{i=1}^\ell y_i^{\gamma_i}$ in $\overline{Q}_{t,\ell}$ is nonzero in $\mathbb{Z}_p$ where $\gamma_i\leq \ell-1$ for every $i\in \{1,\dots,\ell\}$. 

Then, any subset $A$ of $\mathbb{Z}_p\setminus \{0\}$ such that $|A \cap \{ x,-x \}| \leq 1$ for every~$x \in  \mathbb{Z}_p$ and whose size is $k\geq \ell+(t-1)$ is $t$-weak sequenceable.
\end{prop}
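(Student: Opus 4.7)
The plan is to mirror the proof of Proposition \ref{th:conditionWeakSeq} in the CMPP setting, replacing $P$, $H$, $Q$ by their barred analogues. Given a set $A\subseteq\mathbb{Z}_p\setminus\{0\}$ of size $k\geq\ell+(t-1)$ with $|A\cap\{x,-x\}|\leq 1$ for every $x\in\mathbb{Z}_p$, I would first invoke Proposition \ref{fix2} to fix an ordering $(a_1,\dots,a_h)$ of some $h:=k-\ell$ elements of $A$ with $\overline{P}_{h,t}(a_1,\dots,a_h)\neq 0$. The hypothesis $h\leq k-(t-2)$ of Proposition \ref{fix2} translates to $\ell\geq t-2$, which is implied by $\ell\geq 2t-3$ as soon as $t\geq 1$.

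Second, I would recast the problem as finding $y_1,\dots,y_\ell\in A\setminus\{a_1,\dots,a_h\}$ with $\overline{H}_{k,t,\ell}(y_1,\dots,y_\ell)\neq 0$: by the definition \eqref{definizioneH'} any such choice forces $\overline{P}_{k,t}(a_1,\dots,a_h,y_1,\dots,y_\ell)\neq 0$, which is exactly what encodes a $t$-weak sequencing of $A$. Since $k-\ell=h\geq t-1$ (equivalently, $\ell\leq k-(t-1)$), the closed-form expression \eqref{espressioneH'} for $\overline{H}_{k,t,\ell}$ applies, so its $y$-degree and its maximal-$y$-degree part are both read off from $\overline{Q}_{t,\ell}$.

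Third, I would apply Theorem \ref{th:pm} (the Non-Vanishing Corollary) to $\overline{H}_{k,t,\ell}$ with each $C_i := A\setminus\{a_1,\dots,a_h\}$, of size $\ell$. The key observation is that $\overline{Q}_{t,\ell}$ consists of precisely the monomials of $\overline{H}_{k,t,\ell}$ of maximal total degree in $y_1,\dots,y_\ell$; hence any monomial $\prod_{i=1}^\ell y_i^{\gamma_i}$ with $\sum\gamma_i=\deg(\overline{Q}_{t,\ell})=\deg(\overline{H}_{k,t,\ell})$ has the same coefficient in both polynomials. The hypothesis of the proposition supplies such a monomial with nonzero coefficient in $\mathbb{Z}_p$ and with exponents $\gamma_i\leq\ell-1<|C_i|$, so Theorem \ref{th:pm} yields $y_1^*,\dots,y_\ell^*\in A\setminus\{a_1,\dots,a_h\}$ with $\overline{H}_{k,t,\ell}(y_1^*,\dots,y_\ell^*)\neq 0$.

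Finally, distinctness of the $y_i^*$ is automatic from the factor $\prod_{1\leq i<j\leq\ell}(y_j-y_i)$ contained in $\overline{P}_{\ell,t}$ (hence in $\overline{H}_{k,t,\ell}$), so the concatenation $(a_1,\dots,a_h,y_1^*,\dots,y_\ell^*)$ is a valid $t$-weak sequencing of $A$. There is no substantive obstacle here, since the argument is a direct transcription of the proof of Proposition \ref{th:conditionWeakSeq}; the only things to double-check are the bookkeeping of the inequalities $k-(t-1)\geq\ell\geq 2t-3$ together with $\ell\geq t-2$, and the fact that dividing $P_{k,t}$ by $\prod_{1\leq i<k}(x_i+x_{i+1})$ is legitimate precisely under the CMPP hypothesis $|A\cap\{x,-x\}|\leq 1$.
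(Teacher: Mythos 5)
Your proposal is correct and is essentially identical to the paper's own argument: the paper proves Proposition \ref{th:conditionWeakSeq'} precisely by the preceding discussion (Proposition \ref{fix2}, the factorization \eqref{definizioneH'}--\eqref{espressioneQ'}, the degree count $\deg(\overline{Q}_{t,\ell})=(t-2)\ell+\ell(\ell-1)/2$ forcing $\ell\geq 2t-3$, and the Non-Vanishing Corollary applied to $\overline{H}_{k,t,\ell}$ with $C_i=A\setminus\{a_1,\dots,a_{k-\ell}\}$), which is exactly what you transcribe. Your bookkeeping of the inequalities, the identification of $\overline{Q}_{t,\ell}$ as the top-degree part of $\overline{H}_{k,t,\ell}$, and the observations that distinctness of the $y_i^*$ follows from the Vandermonde factor and that removing $\prod(x_i+x_{i+1})$ is licensed by the CMPP hypothesis are all accurate.
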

Now we are ready to prove our main result about Conjecture \ref{conj:cmppWeak}
\begin{thm}\label{th:WeakSeqThm'}
Let $t \leq 7$ be a positive integer and $A$ be a finite subset of~$\mathbb{Z}_p \setminus \{ 0 \}$ such that $|A \cap \{ x,-x \}| \leq 1$ for any~$x \in \mathbb{Z}_p$ and $|A|>t$. Then~$A$ is $t$-weak sequenceable.
\end{thm}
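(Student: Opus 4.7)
The plan is to mimic the proof of Theorem \ref{th:WeakSeqThm} but using the $\overline{P}$, $\overline{H}$, $\overline{Q}$ variants suited to the zero-sum-free-pairs hypothesis. First I reduce to $t=7$, since a $t$-weak sequencing is automatically a $(t-1)$-weak sequencing, and I absorb the very small values of $k$ by invoking the known sequenceability results from Theorem \ref{th:known} (in particular item (1)--(3) which cover $k\le 12$ in $\mathbb{Z}_p$); these automatically give a $7$-weak sequencing whenever $k\le 12$, so I may assume $k\ge 13$.

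Next, I split the remaining range into two regimes based on whether Proposition \ref{th:conditionWeakSeq'} applies. The condition \eqref{conditionELL'} for $t=7$ reads $k-6\ge \ell\ge 11$, so for $k\ge \ell+6$ with $\ell\in\{11,12\}$ I intend to work directly with $\overline{Q}_{7,\ell}$, which by Remark \ref{MagicRemark2} is independent of $k$. Computing the degree gives $\deg(\overline{Q}_{7,11})=5\cdot 11+55=110$, exactly matching the bounding monomial $y_1^{10}\cdots y_{11}^{10}$, so I need to verify that the coefficient of this monomial in $\overline{Q}_{7,11}$ is nonzero in $\mathbb{Z}_p$; for $\ell=12$ the degree is $126<132$, leaving room for several candidate monomials dividing $y_1^{11}\cdots y_{12}^{11}$. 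I would run SageMath to expand $\overline{Q}_{7,\ell}$ (using the definition \eqref{espressioneQ'}) and read off the relevant coefficients as explicit integers; the primes dividing those coefficients determine the finite list of exceptional $p$ for which the large-$k$ argument could fail.

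For the leftover ``small-$k$'' cases (roughly $13\le k\le \ell+5$ with $\ell=11,12$, together with any $p$ excluded in the large-$k$ step), I fall back to Remark \ref{CasiPiccoli} style computation with $\overline{H}_{k,7,\ell}$ directly from \eqref{definizioneH'}, or equivalently from \eqref{espressioneH'} when $k-\ell\ge t-1=6$. Here the expression genuinely depends on the previously-fixed elements $a_1,\dots,a_{k-\ell}$, but by Proposition \ref{fix2} we may prescribe those and then check, monomial by monomial, that some coefficient $[y_1^{\gamma_1}\cdots y_\ell^{\gamma_\ell}]\overline{H}_{k,7,\ell}$ with $\gamma_i\le \ell-1$ is nonzero. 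As in Tables \ref{tab:1}--\ref{tab:2} of the previous theorem, I would produce the analogous tables for $t=7$ listing, for each $k\in\{13,14,15,16\}$ (and any arithmetic exception inherited from the large-$k$ computation), one or two monomials together with the factorizations of their coefficients, so that for every prime $p$ at least one of the listed coefficients is nonzero modulo $p$.

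The main obstacle is computational: for $t=7$ the polynomial $\overline{Q}_{7,12}$ is a product of roughly $\binom{12}{2}+\sum_{i+j>1}(\cdots)$ linear forms in $12$ variables of total degree $126$, and extracting specific coefficients requires a carefully organized symbolic expansion. A naive full expansion is hopeless; instead I would enumerate, for each target monomial, the multisets of linear factors that can contribute to it, using a recursive enumeration grouped by which variables appear in each linear form. The second obstacle is ensuring that the finite set of primes dividing all the chosen coefficients across Tables \ref{tab:1}--\ref{tab:2} analogues is actually empty -- i.e., that no single prime $p$ kills every monomial choice in some row -- which is why listing two alternative monomials per small-$k$ row (as already done in the $t=6$ case) is essential, and analogously for the $\ell=11$ or $\ell=12$ rows in the large-$k$ table.
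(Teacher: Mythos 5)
Your proposal follows essentially the same route as the paper's proof: reduce to $t=7$, dispose of $k\le 12$ via the known sequenceability results, then split into the small-$k$ range ($13\le k\le 16$, plus the exceptional primes for $k=17$) handled by explicit coefficients of $\overline{H}_{k,7,\ell}$ and the large-$k$ range handled by Proposition \ref{th:conditionWeakSeq'} applied to $\overline{Q}_{7,\ell}$ for $\ell\in\{11,12\}$, with SageMath supplying the integer coefficients whose prime factorizations determine the exceptional cases. The only detail worth noting is that the two large-$k$ coefficients share the factor $13$, which is harmless only because $k\ge 13$ forces $p>13$ --- a point the paper makes explicitly and your plan should record when checking that no prime kills every listed monomial.
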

\begin{proof}
Since a $t$-weak sequenceable group is also $(t-1)$-weak sequenceable, we can suppose that $t = 7$.  By \cite{CDOR} we know that each subset $A \subseteq \mathbb{Z}_p \setminus \{0\}$ of size $k \leq 12$ is sequenceable, therefore we can assume that $k \geq 13$ and thus $p>13$.  We divide the proof considering two different ranges of $k$.  

For each $13 \leq k \leq 16$ and for $k=17$ when $p$ is coprime with $2 \cdot 7 \cdot 13 \cdot 4679 \cdot 3953841444019$, the polynomial $\overline{H}_{k, t, \ell}$,  defined in eq. \eqref{definizioneH'}, for $\ell \in \{2t-3,  2t-2\}$ has monomials with non-zero coefficients that divide the bounding monomial $y_1 ^{\ell-1} y_2 ^{\ell-1} \cdots y_{\ell}^{\ell-1}$,  see Table \ref{tab:3}.  Then thanks to the Non-Vanishing Corollary each subset $A \subseteq \mathbb{Z}_p \setminus \{0\}$, $|A| \leq 16$ (or $17$ under the assumption above),  that satisfies the hypothesis of Theorem \ref{th:WeakSeqThm'}, is sequenceable.

For $k \geq 18$ and for $k=17$ when $p$ is coprime with $13 \cdot 67 \cdot 451441944254443$,  we consider the polynomials $\overline{Q}_{t, \ell}$ defined in eq. \eqref{espressioneQ'} for $\ell \in \{ 11, 12 \}$.  Since these polynomials have monomials that divide the bounding monomial with non-zero coefficients (see Table \ref{tab:4}), and since $k \geq \ell + t - 1$,  we can apply Proposition \ref{th:conditionWeakSeq'} to prove Theorem \ref{th:WeakSeqThm'} for $k \geq 18$ (or $17$ under the assumption above).

\begin{longtable}{lllll}
\caption{Monomials and their coefficients sufficient for the proof of Theorem~\ref{th:WeakSeqThm} in the case $A \subseteq \mathbb{Z}_p$,  $|A| \leq 17$.}\label{tab:3}\\
	\hline
	$k$ & $\ell$ & deg & monomial/s & coefficient/s \\
	\hline
	$17$ & $12$ & $125$ & $y_1^{5} y_2^{10} y_3^{11} y_4^{11} y_5^{11} y_6^{11} y_7^{11} y_8^{11} y_9^{11} y_ {11}^{11} y_{11}^{11} y_{12}^{11}$ &  $2 \cdot 7 \cdot 13 \cdot 4679 \cdot 3953841444019$\\
	\hline
	$16$ & $11$ & $109$ & \begin{tabular}{@{}l@{}} $y_1^9 y_2^{10} y_3^{10} y_4^{10} y_5^{10} y_6^{10} y_7^{10} y_8^{10} y_9^{10} y_ {10}^{10} y_{11}^{10}$ \\  $y_1^{10} y_2^{9} y_3^{10} y_4^{10} y_5^{10} y_6^{10} y_7^{10} y_8^{10} y_9^{10} y_ {10}^{10} y_{11}^{10}$ \end{tabular} & \begin{tabular}{@{}l@{}} $13 \cdot 67 \cdot 451441944254443$ \\ $3^2 \cdot 281 \cdot 1163 \cdot 112116705839$  \end{tabular}  \\
	\hline
	$15$ & $11$ & $107$ & \begin{tabular}{@{}l@{}} $y_1^7 y_2^{10} y_3^{10} y_4^{10} y_5^{10} y_6^{10} y_7^{10} y_8^{10} y_9^{10} y_ {10}^{10} y_{11}^{10}$ \\  $y_1^{8} y_2^{9} y_3^{10} y_4^{10} y_5^{10} y_6^{10} y_7^{10} y_8^{10} y_9^{10} y_ {10}^{10} y_{11}^{10}$ \end{tabular} &  \begin{tabular}{@{}l@{}} $2^2 \cdot 59 \cdot 708923 \cdot 1059330263$ \\ $7 \cdot 149 \cdot 239 \cdot 4073 \cdot 212718109$ \end{tabular}\\
	\hline
	$14$ & $11$ & $104$ & \begin{tabular}{@{}l@{}} $y_1^5 y_2^{9} y_3^{10} y_4^{10} y_5^{10} y_6^{10} y_7^{10} y_8^{10} y_9^{10} y_ {10}^{10} y_{11}^{10}$ \\  $y_1^{6} y_2^{8} y_3^{10} y_4^{10} y_5^{10} y_6^{10} y_7^{10} y_8^{10} y_9^{10} y_ {10}^{10} y_{11}^{10}$ \end{tabular} &  \begin{tabular}{@{}l@{}} $2^3 \cdot 41 \cdot 7682093 \cdot 13267117$ \\ $2^2 \cdot 16834339 \cdot 679071929$   \end{tabular}\\
	\hline
	$13$ & $11$ & $100$ & \begin{tabular}{@{}l@{}} $y_1^2 y_2^{8} y_3^{10} y_4^{10} y_5^{10} y_6^{10} y_7^{10} y_8^{10} y_9^{10} y_ {10}^{10} y_{11}^{10}$ \\  $y_1^{3} y_2^{7} y_3^{10} y_4^{10} y_5^{10} y_6^{10} y_7^{10} y_8^{10} y_9^{10} y_ {10}^{10} y_{11}^{10}$ \end{tabular} &  \begin{tabular}{@{}l@{}} $3^3 \cdot 708569 \cdot 33345973$  \\ $3 \cdot 19 \cdot 7829 \cdot 31223 \cdot 121843$ \end{tabular}\\
	\hline
\end{longtable}

\begin{longtable}{llll}
\caption{Monomials and their coefficients sufficient for the proof of Theorem~\ref{th:WeakSeqThm} in the case $A \subseteq \mathbb{Z}_p$,  $|A| \geq 17$}\label{tab:4}\\
	\hline
	$\ell$ & deg & monomial/s & coefficient/s \\
	\hline
	\endhead
	$11$ & $110$ & $y_1^{10} y_2^{10} y_3^{10} y_4^{10} y_5^{10} y_6^{10} y_7^{10} y_8^{10} y_9^{10} y_ {10}^{10} y_{11}^{10}$ & $13 \cdot 67 \cdot 451441944254443$\\
	\hline
	$12$ & $126$ & $y_1^{6} y_2^{10} y_3^{11} y_4^{11} y_5^{11} y_6^{11} y_7^{11} y_8^{11} y_9^{11} y_ {11}^{11} y_{11}^{11} y_{12}^{11}$ &  $2 \cdot 7 \cdot 13 \cdot 4679 \cdot 3953841444019$\\
	\hline
\end{longtable}

\end{proof}
\section{Direct Construction(s)}
In this section we want to attack with a direct construction Conjecture \ref{conj:mainWeak}.  Even though we are able to solve it only for $t=3$, for this value, we obtain a solution in any cyclic group. Then we outline the proof of a similar statement for Conjecture \ref{conj:cmppWeak}. Indeed, in a very similar way, it is possible to prove that the latter conjecture holds, in cyclic groups, for $t\leq 4$.

First of all, we note that, with the same proof of Proposition \ref{fix}, we obtain the following:
\begin{prop}\label{fix3}
Let $A=\{a_1,\dots,a_k\}\subseteq \Z_n\setminus\{0\}$ be a set of size $k$ and let $h$ and $t$ be positive integers such that $h\leq k-(t-1)$. Then there is an ordering of $h$-elements of $A$ that we denote,  up to relabeling,  with $(a_1,\dots,a_h)$,  such that for any ${0\leq i<j\leq \min(h,i+t)}$
$$s_i=a_1+a_2+\dots+a_i\not=a_1+a_2+\dots+a_j=s_j.$$
Moreover, if $n$ is even and $n/2\in A$, we can assume $a_1=n/2$.
\end{prop}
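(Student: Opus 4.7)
The plan is to mirror the inductive proof of Proposition \ref{fix}, rephrased directly in terms of partial sums rather than the polynomial $P_{h,t}$. The key point is that the cancellation step used there --- where one bounds the number of bad $x\in A\setminus\{a_1,\dots,a_m\}$ by counting equations of the form $a_{i+1}+\cdots+a_m+x=0$ --- only uses the fact that such a linear equation has a unique solution in the ambient abelian group. This goes through in $\Z_n$ just as in $\Z_p$, so the field hypothesis used in Proposition \ref{fix} plays no essential role.

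Concretely, I would induct on $h$. The base case $h=1$ is immediate, since $s_0=0\neq a_1=s_1$ because $a_1\in\Z_n\setminus\{0\}$. For the inductive step, suppose that $a_1,\dots,a_m$ have been chosen so as to satisfy the condition of the proposition for $h=m$, and that $m+1\leq k-(t-1)$. The new constraints imposed on $a_{m+1}$ are $s_{m+1}\neq s_j$ for those indices $j$ with $\max(0,m+1-t)\leq j\leq m$. Since $s_{m+1}=s_m+a_{m+1}$, each such constraint forbids at most one value of $a_{m+1}$, namely $s_j-s_m$; moreover the case $j=m$ only forces $a_{m+1}\neq 0$, which is automatic. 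Hence at most $t-1$ values are forbidden, while $|A\setminus\{a_1,\dots,a_m\}|=k-m\geq t$ by the hypothesis $m+1\leq k-(t-1)$. Therefore a valid $a_{m+1}$ exists, completing the induction.

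For the additional clause, when $n$ is even and $n/2\in A$, it suffices to fix $a_1=n/2$ at the base case. This specific choice of $a_1$ does not interfere with the inductive step, which only relies on $(a_1,\dots,a_m)$ satisfying the partial-sum condition and on the size bound $k-m\geq t$; both remain valid regardless of the particular element chosen as $a_1$.

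I do not foresee a genuine obstacle here: the argument is a straightforward counting of forbidden residues in the additive group $\Z_n$. The role played in Proposition \ref{fix} by dividing through by $P_{m,t}(a_1,\dots,a_m)$ --- which required the field structure of $\Z_p$ --- is here replaced by simply subtracting $s_m$ from both sides of each forbidden equality $s_{m+1}=s_j$, an operation that is valid in any abelian group and so causes no loss in passing from $\Z_p$ to a general cyclic $\Z_n$.
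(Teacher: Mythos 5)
Your proof is correct and follows essentially the same route the paper intends: the paper states that Proposition~\ref{fix3} is obtained ``with the same proof of Proposition~\ref{fix}'', and your additive rephrasing --- counting at most $t-1$ forbidden values $s_j-s_m$ for $a_{m+1}$ against the $k-m\geq t$ available elements --- is exactly that argument, correctly stripped of the division by $P_{m,t}$ that required the field structure. The handling of the clause $a_1=n/2$ is also as intended.
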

Here we can not apply directly Proposition \ref{fix} because $\Z_n$ is not necessary a field.  However, with this new proposition we obtain the main result of this section.
\begin{thm}\label{th:WeakSeqDir}
Let $t\leq 3$ be a positive integer.  Then,  for any positive integer $n$,  the cyclic group $\mathbb{Z}_n$ is $t$-weak sequenceable.
\end{thm}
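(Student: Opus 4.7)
The cases $t = 1$ and $t = 2$ admit short elementary arguments. For $t = 1$ any ordering of $A$ is a $1$-weak sequencing since $0 \notin A$. For $t = 2$ one must additionally avoid placing a pair $\{x, -x\}$ in consecutive positions; the pairs $\{x, -x\} \subseteq A$ form a matching on the $k$ vertices of $A$, and for $k \geq 3$ a Hamiltonian path in the complement of this matching is readily constructed (by a direct argument for $k = 3$ and by Dirac's theorem for $k \geq 4$).

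The substantive case is $t = 3$, and the plan proceeds in three phases. First, I would invoke Proposition~\ref{fix3} with $h = k - 2$ to produce an ordering $(a_1, \ldots, a_{k-2})$ of $k - 2$ of the elements of $A$ whose partial sums $s_0, \ldots, s_{k-2}$ already satisfy the $3$-weak sequencing condition; when $n$ is even and $n/2 \in A$, the ``moreover'' clause lets me further enforce $a_1 = n/2$. Write $\{b, c\} = A \setminus \{a_1, \ldots, a_{k-2}\}$ for the two leftover elements. Second, I would attempt to place $b$ and $c$ at positions $k - 1$ and $k$ in one of the two possible orders. The additional constraints imposed by $3$-weak sequencing reduce to
\[
b + c \neq 0,\qquad a_{k-2} + b + c \neq 0,\qquad a_{k-2} + a_{k-1} \neq 0,\qquad a_{k-3} + a_{k-2} + a_{k-1} \neq 0,
\]
where the first two are symmetric in $\{b, c\}$ and the last two depend on which of $b, c$ plays the role of $a_{k-1}$. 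If at least one of the two orderings satisfies all four conditions, we are finished.

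The third phase, and the main obstacle, is the case where both orderings fail. There are three qualitative failure modes: (i) $b + c = 0$, so $\{b, c\} = \{x, -x\}$; (ii) $a_{k-2} + b + c = 0$; or (iii) $\{-a_{k-2},\, -(a_{k-3} + a_{k-2})\} = \{b, c\}$, in which case each of the two placements of $b, c$ violates exactly one of the asymmetric conditions. In each such case I would perform a local repair, exchanging one of $\{a_{k-3}, a_{k-2}\}$ (or, in case (i), one of $\{b, c\}$) with a carefully chosen earlier element $a_j$, and then reverify the four conditions above. Because the $3$-weak condition constrains only windows of length at most three, the verification is confined to the swapped positions and their neighbours, producing a finite case analysis. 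The enforced choice $a_1 = n/2$ is invoked here to eliminate the delicate sub-configuration of case~(i) in which the self-inverse element $n/2$ is one of $b, c$.

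Finally, a handful of very small parameters (at least $k = 4$, likely also $k = 5$) must be checked by direct enumeration, since the repairs in Phase 3 require enough slack in the sequence to find a safe swap partner. I expect the bulk of the effort, and the most delicate bookkeeping, to lie in Phase 3: showing that each failure mode admits a repair and that the repair never breaks an earlier $3$-weak constraint $s_i \neq s_{i+1}, s_{i+2}, s_{i+3}$.
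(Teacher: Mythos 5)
Your choice of $h = k-2$ in Phase~1 is the crux of the difficulty, and Phase~3 as sketched does not close the resulting gap. Once only two elements $\{b,c\}$ remain, the failure modes (i)--(iii) you identify are real and unavoidable (Proposition~\ref{fix3} gives no control over which two elements are left over, so e.g.\ $\{b,c\}=\{x,-x\}$ can certainly occur), and your proposed remedy is to swap a tail element with an earlier $a_j$. But such a swap shifts every partial sum $s_j,\dots,s_{k-3}$ by the same amount, and the constraints that must be re-verified are window sums such as $b+a_{j+1}+a_{j+2}\neq 0$ and $a_{j-1}+b+a_{j+1}\neq 0$. For a \emph{fixed} $b$ and \emph{varying} $j$ these are not ``at most one bad $j$ per constraint'': the condition $a_{j+1}+a_{j+2}=-b$ can hold for linearly many positions $j$ simultaneously, so the implicit counting behind ``a carefully chosen earlier element $a_j$'' does not go through, and no alternative argument for the existence of a safe swap partner is given. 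This is a genuine missing idea, not a routine verification.

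The paper avoids the repair phase entirely by taking $h = k-4$: it fixes a prefix of $k-4$ elements via Proposition~\ref{fix3} (with the same $a_1=n/2$ normalization) and then runs a case analysis on the \emph{four} leftover elements --- splitting on whether some triple among them sums to zero, then on whether some pair does --- using the $4!$ orderings of the tail plus the pigeonhole principle to satisfy all new window constraints without ever touching the verified prefix. If you want to salvage your outline, the cleanest fix is to retreat to $h=k-4$ so that all repairs become permutations of the tail; with $h=k-2$ you would instead need a genuinely new argument (for instance, exploiting the slack in the greedy construction of Proposition~\ref{fix3} to control which two elements are left over), and that is not supplied. Your treatments of $t=1$ and $t=2$ are fine and are in fact more self-contained than the paper's appeal to the $k\leq 9$ sequenceability result, but they do not bear on the main case.
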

\begin{proof}
As usual it is enough to consider only the case $t=3$ and let $A=\{a_1,\dots,a_k\}$ be a subset of $\Z_n\setminus\{0\}$. Here we can assume $k>9$ because, due to the result of \cite{AL20}, Conjecture \ref{conj:main} holds for sets of size at most $9$. According to Proposition \ref{fix3}, for $h=k-4$, there is an ordering of $h$ elements of $A$ that we denote, up to relabeling,  with $(a_1,\dots,a_h)$,  such that $s_i\not=s_{j}$ whenever $0\leq i<j\leq \min(h,i+t)$. Moreover, if $n$ is even and $n/2\in A$, we can assume $a_1=n/2$. 

Then,  we need to order the last four elements $a_{k-3},a_{k-2},a_{k-1},a_k$. We divide the proof in two cases.

CASE 1: $a_{k-3}+a_{k-2}+a_{k-1}=0$. Here we first assume that $-a_{k}\not\in \{a_{k-3},a_{k-2},a_{k-1}\}$. In this case we look for an ordering of type $$(a_1,\dots,a_{k-5},a_{k-4},y_1,y_2,a_{k},y_3)$$ where $y_1,y_2,y_3\in \{a_{k-3},a_{k-2},a_{k-1}\}$. Indeed there exists $y_1\in \{a_{k-3},a_{k-2},a_{k-1}\}$ such that
$$\begin{cases}
y_1+a_{k-4}\not=0;\\
y_1+a_{k-4}+a_{k-5}\not=0.
\end{cases}$$
Then we can easily check that all the partial sums $s_i$ and $s_j$ such that $0\leq i<j\leq \min(k,i+t)$ are different.\\
Let us now assume that $-a_{k}=a_{k-3}$. Here if $a_{k}\not=a_{k-4}+a_{k-5}$ we can choose an ordering of type $$(a_1,\dots,a_{k-5},a_{k-4},-a_k,y_1,a_k,y_2)$$ where $y_1,y_2\in \{a_{k-2},a_{k-1}\}$. Indeed there exists $y_1\in \{a_{k-2},a_{k-1}\}$ such that
$$y_1-a_k+a_{k-4}\not=0.$$
Then we can easily check that all the partial sums $s_i$ and $s_j$ such that $0\leq i<j\leq \min(k,i+t)$ are different.\\
Let us now assume that $-a_{k}=a_{k-3}$ and $a_{k}=a_{k-4}+a_{k-5}$.
Here we have that $$\{a_{k-3},a_{k-2},a_{k-1},a_k\}=\{-(a_{k-4}+a_{k-5}),a_{k-2},a_{k-1},a_{k-4}+a_{k-5}\}$$ and we look for an ordering of type $$(a_1,\dots,a_{k-5},a_{k-4},y_1,a_{k-4}+a_{k-5},y_2,-(a_{k-4}+a_{k-5}))$$ where $y_1,y_2\in \{a_{k-2},a_{k-1}\}$. We first note that we can choose $y_1\in \{a_{k-2},a_{k-1}\}$ such that
$$\begin{cases}y_1+a_{k-4}\not=0;\\
y_1+2a_{k-4}+a_{k-5}\not=0.
\end{cases}$$
Indeed, if such $y_1$ does not exist, we would have that $$\{a_{k-3},a_{k-2},a_{k-1},a_k\}=\{-(a_{k-4}+a_{k-5}),-a_{k-4},-2a_{k-4}-a_{k-5},a_{k-4}+a_{k-5}\}.$$ We recall that also $a_{k-3}+a_{k-2}+a_{k-1}=0$ and hence it would follows that $-4a_{k-4}-2a_{k-5}=0$.  This means that $-2a_{k-4}-a_{k-5}$ is an involution and hence, because $\Z_n$ is cyclic, $-2a_{k-4}-a_{k-5}\in \{0,n/2\}$. Since $-2a_{k-4}-a_{k-5}\in \{a_{k-2},a_{k-1}\}$ this is in contradiction with the choice of the first element of the ordering. Then we can easily check that all the partial sums $s_i$ and $s_j$ such that $0\leq i<j\leq \min(k,i+t)$ are different.

CASE 2: Since we have already considered CASE 1, here we can assume, without loss of generality, that all the triples of elements do not sum to zero.  Moreover, let us assume that ${a_{k-3}+a_{k-2}=0}$. 
Here we can choose an ordering of type $$(a_1,\dots,a_{k-5},a_{k-4},y_1,z_1,y_2,z_2)$$ where $y_1,y_2\in \{a_{k-3},a_{k-2}\}$ and $z_1,z_2\in \{a_{k-1},a_{k}\}$. Indeed there exist $y_1\in \{a_{k-3},a_{k-2}\}$ and $z_1 \in \{a_{k-1},a_{k}\}$ such that
$$\begin{cases}
y_1+a_{k-4}+a_{k-5}\not=0;\\
z_1+y_1+a_{k-4}\not=0.
\end{cases}$$
Then we can easily check that all the partial sums $s_i$ and $s_j$ such that $0\leq i<j\leq \min(k,i+t)$ are different.

Finally, let us assume that all the triples and all the pairs of elements from $\{a_{k-3}, a_{k-2},$ $a_{k-1}, a_k\}$ do not sum to zero. Here, due to the pigeonhole principle, we can choose an ordering of type $$(a_1,\dots,a_{k-5},a_{k-4},y_1,y_2,y_3,y_3)$$ where $y_1,y_2$ are such that
$$\begin{cases}y_1+a_{k-4}\not=0;\\
y_1+a_{k-4}+a_{k-5}\not=0;\\
y_2+y_1+a_{k-4}\not=0.
\end{cases}$$
Then we can easily check that all the partial sums $s_i$ and $s_j$ such that $0\leq i<j\leq \min(k,i+t)$ are different.
\end{proof}
Moreover, with a very similar but more tedious proof, one could get an analogous result about Conjecture \ref{conj:cmppWeak}.
\begin{thm}\label{th:WeakSeqDir''}
Let $t \leq 4$ be a positive integer and $A$ be a finite subset of~$\mathbb{Z}_n \setminus \{ 0 \}$ such that $|A \cap \{ x,-x \}| \leq 1$ for any~$x \in \mathbb{Z}_p$ and $|A|>t$. Then~$A$ is $t$-weak sequenceable.
\end{thm}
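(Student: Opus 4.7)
\medskip

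\noindent\textbf{Proof proposal for Theorem~\ref{th:WeakSeqDir''}.}
The plan is to mimic the structure of the proof of Theorem~\ref{th:WeakSeqDir}, but with one extra element to place and one extra ``length'' of partial sum to control. As a $t$-weak sequenceable group is also $(t-1)$-weak sequenceable, it suffices to take $t=4$; and by \cite{AL20} one may assume $k=|A|>9$, so in particular $k\geq 10$. First, apply Proposition~\ref{fix3} with $h=k-5$ (which is allowed since $h=k-5\leq k-(t-1)$) to fix an ordering $(a_1,\dots,a_{k-5})$ of $k-5$ of the elements of $A$ such that $s_i\neq s_j$ whenever $0\leq i<j\leq k-5$ and $|i-j|\leq 4$; moreover, if $n$ is even and $n/2\in A$, take $a_1=n/2$. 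What remains is to order the five elements $B:=\{a_{k-4},a_{k-3},a_{k-2},a_{k-1},a_k\}$ as $(y_1,y_2,y_3,y_4,y_5)$ so that every partial sum $s_j$, $j\leq k$, avoids its predecessors within distance $4$.

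The crucial simplification coming from the hypothesis $|A\cap\{x,-x\}|\leq 1$ is that every $2$-element sum $u+v$ with $u\neq v$ in $A$ is automatically nonzero; hence all the ``length-2'' partial-sum constraints are free, and we only need to verify that sums of $3$ and of $4$ consecutive elements of the final ordering are nonzero. In particular, the placement of $y_1$ must respect at most two constraints coming from the prefix, namely
\[
a_{k-6}+a_{k-5}+y_1\neq 0\quad\text{and}\quad a_{k-7}+a_{k-6}+a_{k-5}+y_1\neq 0,
\]
each ruling out at most one candidate in $B$; subsequent placements introduce similarly-countable forbidden values. I would then split into cases, exactly parallel to those in the proof of Theorem~\ref{th:WeakSeqDir}, depending on whether there is a \emph{triple} or a \emph{quadruple} of elements of $B$ summing to zero.

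In the ``generic'' case, where no triple and no quadruple of elements of $B$ sums to $0$, I would build the ordering greedily: for each of $y_1,y_2,y_3,y_4$, only a bounded number ($\leq 3$ at each step) of length-$3$ or length-$4$ partial-sum constraints can be active, and since $|B\setminus\{y_1,\dots,y_{i-1}\}|\geq 2$ is almost always larger than the number of forbidden values, a suitable choice exists (and $y_5$ is then forced, with the remaining constraints automatically satisfied once one verifies that none of the sums of $3$ or $4$ consecutive entries can vanish by assumption). In the degenerate cases, where one or more triples or quadruples of elements of $B$ sum to zero, the strategy is to \emph{spread} such a subset around the ordering (placing its members non-consecutively among the last five positions), mirroring Case~1 of the proof of Theorem~\ref{th:WeakSeqDir}: for example, if $y_1+y_3+y_5=0$ is the only vanishing triple, one can distribute the three elements in those slots and fill the rest from the remaining two elements.

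The main obstacle, just as in the proof of Theorem~\ref{th:WeakSeqDir}, is the extremal case in which simultaneously many triples and/or quadruples from $B$ sum to zero, forcing a rigid linear relation among the elements of $B$. The argument handling this should conclude, after some arithmetic, that some element of $B$ must be an involution of $\mathbb{Z}_n$, i.e.\ equal to $n/2$; but then by our choice of the prefix ($a_1=n/2$), this element is not in $B$, yielding the contradiction that closes the case. Because the number of subcases grows roughly quadratically in $t$ (there are now $\binom{5}{3}=10$ triples and $\binom{5}{4}=5$ quadruples to consider, versus $\binom{4}{3}+\binom{4}{2}$ constraints in the proof of Theorem~\ref{th:WeakSeqDir}), the argument becomes significantly more tedious while remaining elementary---which is exactly why the authors only sketch it.
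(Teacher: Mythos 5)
Your plan follows the same overall strategy as the paper's --- which, note, is itself only a sketch: the authors explicitly decline to write out the details of Theorem~\ref{th:WeakSeqDir''}. Both arguments fix a long prefix greedily via Proposition~\ref{fix3}, exploit the hypothesis $|A\cap\{x,-x\}|\leq 1$ to discard all length-$2$ constraints, and finish with a case analysis on vanishing consecutive sums in the tail, invoking the $a_1=n/2$ normalization to kill an extremal case. The one substantive difference is the split point. You fix $k-5$ elements by Proposition~\ref{fix3} alone and permute the remaining five; the paper instead fixes $k-6$ elements, then chooses $a_{k-5}$ subject to the \emph{additional} conditions $2a_{k-5}+2a_{k-6}+a_{k-7}\neq 0$ and $s_{k-5}\notin\{s_{k-8},s_{k-9}\}$, and only then permutes the last four, splitting on whether $a_{k-4}+a_{k-3}+a_{k-2}+a_{k-1}=0$. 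Your version buys more freedom in the tail ($5!$ orderings rather than $4!$), but the authors' extra conditioning of $a_{k-5}$ is a warning sign: it indicates that the obstruction they encountered in the degenerate case is a linear relation involving the \emph{prefix} elements $a_{k-5},a_{k-6},a_{k-7}$ and not only the tail set $B$, so it is not clear that every extremal configuration forces an element of $B$ to be an involution, as your endgame assumes. That endgame, together with the whole ``spread the vanishing subsets'' step, is asserted rather than verified; since for $t=4$ the last two placements have $2$ (resp.\ $1$) candidates against up to $2$ constraints each, everything hinges on showing that in each degenerate configuration the pure-$B$ constraints can be neutralized by permuting $B$ alone, and this is precisely the part that neither you nor the paper carries out. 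In short: same route, a defensible variant of the decomposition, and the same (acknowledged) incompleteness as the original.
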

This Theorem can be proved by fixing the first $k-6$ elements of the ordering and then choosing $a_{k-5}$ in such a way that $2a_{k-5}+2a_{k-6}+a_{k-7}\not=0$ and $s_{k-5}$ is different from $s_{k-8}$ and $s_{k-9}$. Then we can proceed, similarly to what we did for Theorem \ref{th:WeakSeqDir}, by considering two cases according to whether $a_{k-4}+a_{k-3}+a_{k-2}+a_{k-1}=0$ or not.
However, we prefer not to write a complete proof of this statement since we believe it goes beyond the scope of this paper and we have the feeling that it is not very deep from the mathematical point of view.
\section{A Probabilistic Approach}
In this section, we prove that a randomly chosen subset $A \subseteq \mathbb{Z}_n \setminus \{0\}$ of size $k$ is $t$-weak sequenceable when $t k$ is small with respect to $n$ and $t \geq 2$. This result is the corresponding of Theorem 4.2 of \cite{ADMS16} in the case of weak-sequenceability. In addition we also prove that for every $A \subseteq \mathbb{Z}_n \setminus \{0\}$ of size $k$ there exists an ordering $\bm \omega$ of $A$ with at most $t-2$ pairs of partial sums $s_i$,  $s_j$ such that $s_i = s_j$ and $|j-i| \leq t$.  

\begin{defn}
Let $\mathcal{A}_{n,k}$ be the set of all subsets of size $k$ of $\mathbb{Z}_n \setminus \{0\}$ that are $t$-weak sequenceable.  We say that {\em almost all $k$-subsets} of $\mathbb{Z}_n$ are $t$-weak sequenceable if
$$
	\lim_{n \to \infty} \frac{|\mathcal{A}_{n,k}|}{\binom{n-1}{k}} = 1\,.
$$
\end{defn}

\begin{prop}\label{th:RandomOrdSequence}
Let us choose an ordered sequence $ \bm \omega = (a_1,  a_2, \ldots, a_k)$ of distinct elements of $\mathbb{Z}_n \setminus \{0\}$ uniformly at random.  The probability that $\bm \omega$ is a $t$-weak sequencing of the set $\{a_1, a_2, \ldots, a_k\}$ is greater than or equal to $1 - \frac{(t-1)(k-2)}{n-2}$.
\end{prop}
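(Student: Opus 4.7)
\emph{Proof proposal.} The plan is to apply a union bound over the family of ``bad'' events that obstruct the $t$-weak sequencing property. First I would observe that $\bm\omega$ fails to be a $t$-weak sequencing iff there exist indices $0 \leq i < j \leq k$ with $2 \leq j - i \leq t$ satisfying $a_{i+1} + a_{i+2} + \cdots + a_j = 0$; the case $j - i = 1$ is automatic since every $a_m \neq 0$. Denote this event by $E_{i,j}$ and write $d := j - i$. For each $d \in \{2, \ldots, t\}$ there are exactly $k - d + 1$ admissible pairs.

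Next I would estimate $P(E_{i,j})$ by exploiting the permutation-symmetry of the uniform law on ordered $k$-tuples of distinct nonzero elements: the marginal distribution of the window $(a_{i+1}, \ldots, a_j)$ is uniform over ordered $d$-tuples of pairwise distinct elements of $\mathbb{Z}_n \setminus \{0\}$. Conditioning on the first $d - 1$ entries $a_{i+1}, \ldots, a_{j-1}$, exchangeability implies that $a_j$ is uniform over a subset of $\mathbb{Z}_n \setminus \{0\}$ of size $n - d$, whereas $E_{i,j}$ prescribes the single value $a_j = -(a_{i+1} + \cdots + a_{j-1})$. Hence $P(E_{i,j}) \leq \frac{1}{n - d}$. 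Summing via the union bound yields
\begin{equation*}
P(\bm\omega \text{ is not a $t$-weak sequencing}) \;\leq\; \sum_{d = 2}^{t} \frac{k - d + 1}{n - d},
\end{equation*}
and a short term-by-term manipulation (using $k \leq n - 1$) would then repackage the right-hand side as the announced $\frac{(t-1)(k-2)}{n-2}$.

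\textbf{Main obstacle.} The delicate point is the last, bookkeeping step: the raw sum produced by the union bound is comparable to, but not literally equal to, the stated closed form, so I expect that either a slightly sharper per-event estimate or a careful rearrangement of summands will be required. For instance, when $d = 2$ the exact probability is $\tfrac{1}{n-2}$ or $\tfrac{1}{n-1}$ depending on the parity of $n$, and a similar parity/character-sum refinement for $d > 2$ can be used to absorb the ``large-$d$'' contributions into the denominator $n - 2$; the qualitative content, that the failure probability is $O(tk/n)$, is already immediate from the union bound itself.
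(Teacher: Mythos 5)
Your argument follows the paper's proof essentially step for step: a union bound over the events $a_{i+1}+\cdots+a_j=0$ with $2\le j-i\le t$, each bounded via exchangeability by conditioning on the first $j-i-1$ entries of the window, which gives $\mathbb{P}(E_{i,j})\le 1/(n-(j-i))$. Your count of $k-d+1$ admissible pairs for each difference $d$ is the correct one (the paper's proof writes $k-l$ at this point, which is off by one), so the union bound you arrive at, $\sum_{d=2}^{t}\frac{k-d+1}{n-d}$, is the right output of this method.

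The ``main obstacle'' you flag at the end is, however, a genuine gap, and it cannot be closed: the sum $\sum_{d=2}^{t}\frac{k-d+1}{n-d}$ does not reduce to $\frac{(t-1)(k-2)}{n-2}$, and the stated inequality is in fact false. Take $n=5$, $k=3$, $t=2$: a triple $(a_1,a_2,a_3)$ is a $2$-weak sequencing iff $a_1\ne -a_2$ and $a_3\ne -a_2$, and a direct count gives $8$ good triples out of $24$, i.e.\ probability $1/3$, whereas the proposition claims at least $2/3$. Your proposed parity refinement only helps for $d=2$ and even $n$; for odd $n$ the probability of $a_{i+1}+a_{i+2}=0$ is exactly $1/(n-2)$ and there are exactly $k-1$ such events, so no sharper per-event estimate exists and the union bound is tight in the example above. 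What your computation does establish, using $\frac{k-d+1}{n-d}\le\frac{k-1}{n-2}$ for $d\ge 2$ and $k<n$, is the slightly weaker bound $1-\frac{(t-1)(k-1)}{n-2}$; that is the statement this argument actually proves (and it still suffices for the asymptotic conclusions drawn from the proposition later in the section).
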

\begin{proof}
Let $q$ be the probability that $\bm \omega$ is not a $t$-weak sequencing of $\{a_1, \ldots, a_k\}$. Then,  denoted by $\bm s = (s_0, s_1, \ldots, s_k)$ the partial sums of $\bm \omega$, we get
\begin{align*}
	q &\leq \sum_{\substack{0 \leq i < j\leq k \\ j - i \leq t, j \neq i+1}} \mathbb{P}(s_j = s_i) = \sum_{\substack{0 \leq i < j\leq k \\ j - i \leq t, j \neq i+1}} \mathbb{P}(a_{i+1} + \ldots + a_j = 0) \\
	&=  \sum_{\substack{0 \leq i < j\leq k \\ j - i \leq t, j \neq i+1}} \sum_{s = 0}^{n-1} \mathbb{P}(a_{i+1} + \ldots + a_{j-1} = s) \cdot \mathbb{P}(a_j = -s | a_{i+1} + \ldots + a_{j-1} = s) \,.
\end{align*}
We can upper bound $\mathbb{P}(a_j = -s | a_{i+1} + \ldots + a_{j-1} = s)$ by $1 / (n - j + i)$ for each $s \in \mathbb{Z}_n$ since there is at most one possible outcome for $a_j \in \mathbb{Z}_n \setminus \{0,  a_{i+1}, \ldots, a_{j-1}\}$, that makes the sum $a_{i+1} + \ldots + a_j = 0$. Hence
\begin{align*}
	q &\leq \sum_{\substack{0 \leq i < j\leq k \\ j - i \leq t, j \neq i+1}} \frac{1}{n - j + i} = \sum_{l = 2}^{t} \sum_{\substack{0 \leq i < j \leq k \\ j - i  = l}} \frac{1}{n-j+i} \\
	&= \sum_{l = 2}^{t} \frac{k-l}{n-l} \leq \frac{(t-1) (k-2)}{n-2} \, , 
\end{align*}
where we used the fact that $(k-l)/(n-l) \leq (k-2)/(n-2)$ for every $n > k \geq 2$ and $l \geq 2$. 
\end{proof}

\begin{thm}
Almost all $k$-subsets of $\mathbb{Z}_n$ are $t$-weak sequenceable when $tk = o(n)$ for $n \to \infty$.
\end{thm}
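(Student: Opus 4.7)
The plan is to derive the theorem as a direct corollary of Proposition \ref{th:RandomOrdSequence} by coupling the uniform distribution on ordered $k$-tuples with the uniform distribution on $k$-subsets. The key observation is that being $t$-weak sequenceable is a property of the set, so if a uniformly random ordered tuple $\bm\omega$ happens to be a $t$-weak sequencing, then in particular the underlying set is $t$-weak sequenceable; hence any lower bound on the probability of the former produces a lower bound on the probability of the latter.

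Concretely, I would first note that choosing an ordered $k$-tuple $\bm\omega = (a_1, \ldots, a_k)$ of distinct nonzero elements of $\mathbb{Z}_n$ uniformly at random induces a uniform distribution on the unordered set $A = \{a_1, \ldots, a_k\}$ over the $\binom{n-1}{k}$ subsets of $\mathbb{Z}_n \setminus \{0\}$, because each $k$-subset arises from exactly $k!$ orderings and each ordered tuple determines a unique subset. Next, I would record the inclusion of events
$$\{\bm\omega \text{ is a }t\text{-weak sequencing of }A\} \subseteq \{A \in \mathcal{A}_{n,k}\},$$
since if $A$ is not $t$-weak sequenceable then no ordering of its elements can witness $t$-weak sequenceability. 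Chaining these facts together with Proposition \ref{th:RandomOrdSequence} gives
$$\frac{|\mathcal{A}_{n,k}|}{\binom{n-1}{k}} = \mathbb{P}(A \in \mathcal{A}_{n,k}) \geq \mathbb{P}(\bm\omega \text{ is a }t\text{-weak sequencing}) \geq 1 - \frac{(t-1)(k-2)}{n-2}.$$
Finally, I would use the hypothesis $tk = o(n)$ to bound $(t-1)(k-2)/(n-2) \leq tk/(n-2) \to 0$ as $n \to \infty$, and conclude that $|\mathcal{A}_{n,k}| / \binom{n-1}{k} \to 1$.

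There is no serious obstacle in this argument: all the probabilistic work has been concentrated in Proposition \ref{th:RandomOrdSequence}, and the remainder reduces to the elementary symmetrization observation relating a uniformly random ordering to a uniformly random subset, together with the correct direction of the set–ordering implication. The only thing to watch is that the implication goes from ``ordering works'' to ``set is sequenceable'' (and not the reverse), so that the desired lower bound is preserved when passing from tuples to sets.
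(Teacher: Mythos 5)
Your proposal is correct and follows the same route as the paper: the paper's proof is a one-line appeal to Proposition \ref{th:RandomOrdSequence}, and you have simply made explicit the standard symmetrization step (a uniform ordered tuple induces a uniform subset, and the event inclusion goes in the right direction) that the paper leaves implicit.
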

\begin{proof}
By Proposition \ref{th:RandomOrdSequence} it is easy to see that the probability a randomly chosen subset of $\mathbb{Z}_n \setminus \{0\}$ of size $k$ is not $t$-weak sequenceable is at most $\frac{(t-1) (k-2)}{n-2}$. 
\end{proof}

\begin{prop}\label{th:ExpValueOfEqualSums}
Let $A$ be a subset of size $k$ of $\mathbb{Z}_n \setminus \{0\}$ and let us choose uniformly at random an ordering $\bm \omega = (a_1, a_2, \ldots, a_k)$ of $A$.  Denoted the partial sums of $\bm \omega$ by $\bm s = (s_0, s_1, \ldots,  s_k)$,  let $X$ be the random variable that represents the number of pairs $(i, j)$ such that $s_i = s_j$ with $0\leq i<j \leq k$ and $j - i \leq t$, where $2 \leq t <k$.  

Then the expected value $\mathbb{E}(X)$ is smaller than $t-1$.
\end{prop}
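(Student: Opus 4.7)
My plan is to apply linearity of expectation to the indicator events $\{s_i=s_j\}$ and adapt the conditioning argument from Proposition~\ref{th:RandomOrdSequence} to the setting of a uniform random permutation of a fixed set. I would first write
$$\mathbb{E}(X) = \sum_{\substack{0\le i<j\le k\\ j-i\le t}} \mathbb{P}(s_i=s_j),$$
and then discard the pairs with $j=i+1$, which contribute zero because $s_j-s_i = a_{i+1}\in A\subseteq\mathbb{Z}_n\setminus\{0\}$. Only the windows of length $l := j-i \in\{2,\dots,t\}$ remain.

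For each such window I would rewrite the equality $s_i=s_j$ as $a_{i+1}+\cdots+a_j=0$ and then condition on the first $l-1$ entries of the window. Since the ordering is a uniformly random permutation of $A$, conditional on that prefix the final entry $a_j$ is uniform over $A\setminus\{a_{i+1},\dots,a_{j-1}\}$, a set of $k-l+1$ elements, and at most one such value closes the window to zero. Hence $\mathbb{P}(s_i=s_j)\le 1/(k-l+1)$, uniformly in the prefix. Since there are exactly $k-l+1$ windows with $j-i=l$, each value of $l$ contributes at most $1$ to the sum, yielding $\mathbb{E}(X)\le t-1$.

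The delicate part, which I expect to be the main obstacle, is upgrading this to the strict inequality. I would argue that the bounds for two different values of $l$ cannot both be tight simultaneously. The bound for $l=2$ is tight only when $-a\in A$ for every $a\in A$, i.e.\ $A=-A$. Given this, pick any non-involution $a\in A$ (which exists because $k\ge 3$ while $\mathbb{Z}_n$ contains at most one involution): the ordered pair $(a,-a)$ satisfies $-(a+(-a))=0\notin A$, so the window of length $3$ starting with that pair has conditional completion probability zero, forcing the contribution of $l=3$ strictly below $1$. This settles $t\ge 3$. The boundary case $t=2$ is the most sensitive, since only $l=2$ is present and the bound can actually be saturated (for example by $A=\{\pm 1,\pm 2\}$); I would expect to cover it by invoking the direct construction of Section~3, which already establishes $t$-weak sequenceability for $t\le 3$.
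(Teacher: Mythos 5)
Your decomposition is the same as the paper's: linearity of expectation over the windows, discarding the pairs with $j=i+1$, and bounding $\mathbb{P}(s_i=s_j)\le 1/(k-l+1)$ for a window of length $l=j-i$ by conditioning on its first $l-1$ entries. Where you diverge is in the bookkeeping, and you are the more careful party: there are indeed $k-l+1$ windows of each length $l$, so each $l$ contributes at most $1$ and this route only yields $\mathbb{E}(X)\le t-1$. The paper instead counts $k-l$ windows per length (an off-by-one slip already present in the displayed computation of Proposition~\ref{th:RandomOrdSequence}) and thereby gets the strict inequality $\sum_{l=2}^{t}\tfrac{k-l}{k-l+1}<t-1$ with no further effort. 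So the ``delicate part'' you isolate is genuinely there, and your argument for $t\ge 3$ --- if the $l=2$ contribution is tight then $A=-A$ with no involutions, whence a length-$3$ window whose prefix is $(a,-a)$ has completion value $0\notin A$ and the $l=3$ contribution falls strictly below $1$ --- is correct and supplies exactly what the paper's proof is missing in that range.

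The gap is at $t=2$, and it cannot be closed, because there the statement as written fails: for your own example $A=\{\pm1,\pm2\}$ (and generally for any $A=-A$ without involutions) one computes $\mathbb{E}(X)=(k-1)\cdot\tfrac{k}{k(k-1)}=1=t-1$, so the inequality is an equality rather than strict. Appealing to the direct construction of Section~3 does not repair this: Theorem~\ref{th:WeakSeqDir} produces a single ordering with $X=0$, which is (a strengthening of) the corollary \emph{deduced from} this proposition, not a bound on the expectation over all orderings. The correct outcome of your approach is therefore: $\mathbb{E}(X)\le t-1$ always, with strict inequality for $t\ge 3$ (and for $t=2$ unless $A=-A$ without involutions); the paper's strict claim for $t=2$ should be regarded as an artifact of its miscount. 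You should state this explicitly rather than deferring the case to Section~3.
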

\begin{proof}
By linearity of expectation we have that
$$
\mathbb{E}(X) = \sum_{\substack{0 \leq i < j\leq k \\ j - i \leq t, j \neq i+1}} \mathbb{P}(s_j = s_i)\,.
$$
Proceeding as in Proposition \ref{th:RandomOrdSequence} we get
\begin{align*}
 \mathbb{E}(X) &= \sum_{\substack{0 \leq i < j\leq k \\ j - i \leq t, j \neq i+1}} \mathbb{P}(a_{i+1} + \ldots + a_j = 0) \\
 & \leq \sum_{\substack{0 \leq i < j\leq k \\ j - i \leq t, j \neq i+1}}  \frac{1}{k - j + i + 1} = \sum_{l=2}^{t} \frac{k-l}{k-l+1} < t-1\,.
\end{align*}
\end{proof}

From Proposition \ref{th:ExpValueOfEqualSums} it immediately follows that

\begin{thm}
For every $A \subseteq \mathbb{Z}_n \setminus \{0\}$ and $2 \leq t < |A| $, there exists an ordering of $A$ with less than $t-1$ pairs of equal partial sums $s_i = s_j$ where $|j-i| \leq t$.
\end{thm}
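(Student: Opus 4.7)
The plan is to derive this theorem as an immediate consequence of Proposition~\ref{th:ExpValueOfEqualSums} via the first-moment method. Concretely, consider the random variable $X$ counting the number of pairs $(i,j)$ with $0\leq i<j\leq k$, $j-i\leq t$, and $s_i=s_j$, where the partial sums come from a uniformly random ordering $\bm\omega$ of $A$. Proposition~\ref{th:ExpValueOfEqualSums} tells us that $\mathbb{E}(X)<t-1$.

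Since $X$ takes values in the nonnegative integers, the existence of a particular ordering realizing a value of $X$ strictly below the mean is immediate: if every ordering gave $X\geq t-1$, then the average would also be at least $t-1$, contradicting the strict inequality. Therefore there must be at least one ordering $\bm\omega^\ast$ of $A$ for which the corresponding count satisfies $X(\bm\omega^\ast)<t-1$, which is exactly the statement of the theorem.

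There is essentially no obstacle here; the only thing to emphasize is that the probabilistic bound in Proposition~\ref{th:ExpValueOfEqualSums} is \emph{strict}, which is what lets us conclude the existence of an ordering with \emph{fewer than} $t-1$ bad pairs (rather than only at most $t-1$). One could alternatively phrase the same argument in purely enumerative terms: summing $X(\bm\omega)$ over all $k!$ orderings of $A$ yields a total smaller than $(t-1)\cdot k!$, so some ordering must contribute less than $t-1$ to the sum. Either formulation gives a proof in a single line beyond invoking the previous proposition.
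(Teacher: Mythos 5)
Your proposal is correct and matches the paper exactly: the paper derives this theorem as an immediate consequence of Proposition~\ref{th:ExpValueOfEqualSums} by the same first-moment argument (some ordering must achieve a value of $X$ at most the mean, which is strictly less than $t-1$). Your added remark about the strictness of the bound is the right point to emphasize.
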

\section*{Acknowledgements}
The first author was partially supported by INdAM--GNSAGA.

\end{document}